\newtheorem{theorem}{Theorem}[section]
\newtheorem{corollary}[theorem]{Corollary}
\newtheorem{proposition}[theorem]{Proposition}
\newtheorem{lemma}[theorem]{Lemma}
\theoremstyle{definition}
\newtheorem{definition}[theorem]{Definition}
\newtheorem{example}[theorem]{Example}
\newtheorem{examples}[theorem]{Examples}
\newtheorem{remark}[theorem]{Remark}
\DeclareMathOperator{\Nil}{Nil}
\DeclareMathOperator{\End}{End}
\DeclareMathOperator{\Id}{Id}
\DeclareMathOperator{\Max}{Max}
\DeclareMathOperator{\Spec}{Spec}
\DeclareMathOperator{\Ann}{Ann}
\numberwithin{equation}{section}
\begin{document}

\title[Semirings in which every non-unit is a zero-divisor]{Classical semirings}

\author[H. Behzadipour]{Hussein Behzadipour}

\address{Hussein Behzadipour\\
	Department of Mathematics \\
	Sharif University of Technology\\
	Tehran\\
	Iran}
\email{hussein.behzadipour@gmail.com}

\author[H. Koppelaar]{Henk Koppelaar}

\address{Henk Koppelaar\\
	Faculty of Electrical Engineering, Mathematics and Computer Science\\
	Delft University of Technology\\ Delft\\ The Netherlands}

\email{koppelaar.henk@gmail.com}

\author[P. Nasehpour]{Peyman Nasehpour}

\address{Peyman Nasehpour\\
	An independent researcher of mathematics and music\\ Tehran, Iran}

\email{nasehpour@gmail.com}

\subjclass[2010]{16Y60, 12K10, 13A15.}

\keywords{Classical semirings, Completely primary semirings, Units, Zero-divisors, Nilpotents}

\begin{abstract}
In this paper, we investigate semirings whose elements are either units or zero-divisors (nilpotents) with many examples. While comparing these semirings with their counterparts in ring theory, we observe that their behavior is different in many cases. 
\end{abstract}

\maketitle

\section{Introduction}

Monoids with this property that each element is either a unit or a zero-divisor have applications in automata theory (see Theorem 7.26 and Example 7.5 in \cite{SokratovaKaljulaid2000}). Rings with the same property, often called classical rings, have applications in coding theory, cryptography, and system theory (see Remark \ref{applicationsclassicalrings}). A specular kind of classical rings, i.e., the algebra of dual numbers (check Definition \ref{generalizeddualnumbers} and Theorem \ref{generalizeddualnumberalgebraclassical}), is a powerful mathematical tool for kinematic and dynamic analysis of spatial mechanisms (cf. \cite{PennestriStefanelli2007}). Also, note that a subfamily of classical rings, so-called completely primary rings, have found some interesting applications in algebraic coding theory (cf. \cite{DoughertyYildizKaradeniz2011}). All these examples motivated the authors of the current paper to investigate this property in the context of semiring theory. Since the term ``semiring'' has different meanings in the literature, it is important to clarify what we mean by this term from the beginning. 

In the current paper, an algebraic structure $(S,+,\cdot,0,1)$ is a semiring if $(S,+,0)$ and $(S,\cdot,1)$ are commutative monoids, the multiplication distributes on the addition, and the element zero is absorbing for the multiplication, i.e., $s \cdot 0 = 0$, for all $s \in S$. Note that semirings are a generalization of rings and bounded distributive lattices. Those semirings that are not rings are called proper. 

For this paper, it is also crucial to recall the following concepts for a semiring $S$:

\begin{itemize}
	\item An element $s$ of $S$ is multiplicatively cancellative (regular) if \[sx_1 = sx_2~(s x_1 = 0) \implies x_1 = x_2~(x_1 = 0), \qquad\forall~x_1, x_2 \in S.\]
	
	\item An element $s$ in $S$ is a zero-divisor (nilpotent) if it is not multiplicatively regular ($s^n = 0$, for some positive integer $n$). 
\end{itemize}

Let $S$ be a semiring and consider the following conditions:

\begin{enumerate}
	\item $S$ is completely primary (i.e., each element of $S$ is either a unit or nilpotent);
	\item $S$ is classical (i.e., each element of $S$ is either a unit or a zero-divisor);
    \item Each element of $S$ is either a unit or multiplicatively non-cancellative. 
\end{enumerate}

Then, we have $(1) \implies (2) \implies (3)$ and non of the implications is reversible in general (see Proposition \ref{latticesnotclassical} and Theorem \ref{directproductcompletelyprimarysemirings}).

The main purpose of the current paper is to investigate the family of ``classical semirings'' and its subfamily ``completely primary semirings''. Here is a brief sketch of the contents of our paper:

In \S\ref{sec:nonunitsarenoncancellative}, we investigate those semirings in which every non-unit is multiplicatively non-cancellative and in Proposition \ref{piregularpro}, we prove that $\pi$-regular semirings have this property. Inspired by ring theory, we define a semiring $S$ to be $\pi$-regular if for any $s \in S$, there is a $t \in S$ and a positive integer $n$ such that $s^{n+1} t = s^n$. Note that the family of $\pi$-regular semirings include periodic semirings (see Definition \ref{piregulardef} and Examples \ref{periodicsemirings}).

Let $S$ a semiring. We recall that a commutative monoid $(M,+,0_M)$ is said to be an $S$-semimodule if there is a function, called scalar multiplication, $\lambda: S \times M \rightarrow M$, defined by $\lambda (s,m)= s\cdot m$ such that the following conditions are satisfied:

\begin{itemize}
	\item $s\cdot (m+n) = s\cdot m+s\cdot n$ for all $s\in S$ and $m,n \in M$;
	\item $(s+t)\cdot m = s\cdot m+t\cdot m$ and $(st)\cdot m = s\cdot (t\cdot m)$ for all $s,t\in S$ and $m\in M$;
	\item $s\cdot 0_M=0_M$ for all $s\in S$ and $0_S \cdot m = 0_M$ and $1\cdot m=m$ for all $m\in M$.
\end{itemize}

Let $S$ be a semiring and $A$ an $S$-semimodule. Also, let $(A, \cdot)$ be a commutative monoid. We recall that $A$ is said to be an $S$-semialgebra if for all $\alpha$ in $S$ and, $a$ and $b$ in $A$, we have \[\alpha (a \cdot b) = (\alpha a) \cdot b = a \cdot (\alpha b).\]

Following the terminology ``J\'{o}nsson module'' in \cite{GilmerHeinzer1983}, we say an $S$-semimodule $M$ is a J\'{o}nsson $S$-semimodule if the cardinality of $N$ is less than the cardinality of $M$ (i.e., $| N | < | M |$) for each proper $S$-subsemimodule $N$ of $M$ (Definition \ref{Jonssonsemimodule}), and then in Corollary \ref{Jonssonsemialgebra}, we show that each element of $A$ is either a unit or multiplicatively non-cancellative if $A$ is a J\'{o}nsson $S$-semialgebra.

After some preparations, we pass to the next section of the paper to investigate classical semirings. Let us recall that any finite ring is classical (Corollary \ref{finiteringclassical}). Surprisingly, it turns out that this is not the case in semiring theory (see Examples \ref{nonclassicalpropersemirings}). However, in Theorem \ref{afamilyfinitepropersemirings}, we prove that a famous family of finite semirings, i.e., $B(n,i)$s, are examples of proper classical semirings if $i > 0$. We also prove that a direct product of classical semirings is classical. This gives plenty of examples for classical semirings. 

Continuing our investigation of classical semirings in \S\ref{sec:classicalsemirings}, we also show that any complemented semiring is classical (see Theorem \ref{directproductclassicalsemirings} and Theorem \ref{complementedsemiringclassical}). We recall that an element $s$ of a semiring $S$ is complemented if there is an element $s^*$ such that $ss^* = 0$ and $s+s^* = 1$. A semiring $S$ is complemented if each element of $S$ is complemented \cite[\S5]{Golan1999}. Notice that Boolean algebras are important examples of complemented semirings.

Observe that in Lemma 3 in \cite{Gill1971}, Gill shows that a uniserial ring $R$ is classical if and only if $\Ann \Ann (b) = (b)$, for all $b \in R$. As a generalization to this result, we show that a uniserial semiring $S$ is classical if and only if $\Ann \Ann (b) = (b)$, for all $b \in S$ (see Theorem \ref{uniserialsemiringclassical}).

In Proposition \ref{expectationsemiringclassical}, we find a condition for the expectation semirings to be classical. In fact, we prove that if $S$ is a classical semiring and $M$ an $S$-semimodule such that $(M,+)$ is a group, then the expectation semiring $S \widetilde{\oplus} M$ is also classical.

From ring theory, we know that the total quotient ring $Q(R)$ of a ring $R$ is always classical and a ring $R$ is classical if and only if $R = Q(R)$ \cite[Proposition 11.4]{Lam1999}. It turns out that this is not the case in semiring theory. We summarize our findings in this direction as follows:

\begin{itemize}
	\item In Theorem \ref{totalquotientsemiringnotclassical}, we find a semiring $S$ such that $Q(S)$ is not classical.
	
	\item In Proposition \ref{totalquotientsemiringclassical}, we show that if $S$ is classical, then $S = Q(S)$.
	
	\item In Example \ref{totalquotientsemiringnotclassical2}, we find some semirings $S$ with $S = Q(S)$ while $S$ is not classical.
\end{itemize}

Since nilpotents are zero-divisors, semirings in which their elements are either units or nilpotents are examples of classical semirings. We call these semirings completely primary (see Definition \ref{completelyprimarysemiringdef}) and devote \S\ref{sec:completelyprimarysemirings} to discuss them. 

Note that the completely primary rings of the form \[\displaystyle \frac{\mathbb{F}_2[x_1,x_2,\dots,x_n]}{(x^2_1,x^2_2,\dots,x^2_n)},\] discussed in Proposition \ref{completelyprimaryrings}, have applications in coding theory \cite{DoughertyYildizKaradeniz2011}. However, our theory of completely primary semirings will be justifiable if we can find some examples of proper semirings of this kind. This is the task that we do in Proposition \ref{completelyprimarysemirings}. We also show that if $S$ is a completely primary semiring and $M$ an $S$-semimodule such that $(M,+)$ is a group, then the expectation semiring $S \widetilde{\oplus} M$ is also completely primary (see Proposition \ref{expectationsemiringcompletelyprimary}).

One may have seen in ring theory that any Artinian local ring is completely primary (see p. 136 in \cite{Cohn2003}). However, it turns out that there are some finite local semirings being not completely primary. Actually, in Proposition \ref{Artiniannotcompletelyprimary}, we show that if $S = \{0,u,1\}$ is a multiplicatively idempotent semiring with three elements such that $u + u \neq 1$ (for example, if $S$ is Hu's or LaGrassa's semiring explained in Examples \ref{nonclassicalpropersemirings}), then $S$ is local and Artinian but not completely primary.

\section{Semiring in which every non-unit is non-cancellative}\label{sec:nonunitsarenoncancellative}

Let us recall that an element $s$ of a semiring $S$ is multiplicatively cancellative if $sx_1 = sx_2$ implies $x_1 = x_2$, for all $x_1, x_2$ in $S$. The main purpose of this section is to study those semirings in which every non-unit is non-cancellative. Note that not all semirings have this property. For example, in the semiring of non-negative integers $\mathbb{N}_0$ equipped with ordinary addition and multiplication, each integer number $n > 1$ is neither a unit nor multiplicatively non-cancellative in $\mathbb{N}_0$.

Let us recall that a ring $R$ is called to be $\pi$-regular if for any $r \in R$, there is a $y \in R$ and a positive integer $n$ with $r^{n+1} y = r^n$ \cite[Theorem 3.1]{Huckaba1988}. As Kaplansky explains in \cite{Kaplansky1950}, $\pi$-regular rings were introduced by McCoy in \cite{McCoy1939}. Based on this, we give the following definition: 

\begin{definition}\label{piregulardef}
We say a semiring $S$ is $\pi$-regular if for any $s \in S$, there is a $t \in S$ and a positive integer $n$ such that $s^{n+1} t = s^n$.
\end{definition}

\begin{proposition}\label{piregularpro}
Let $S$ be a $\pi$-regular semiring. Then, each element of $S$ is either a unit or multiplicatively non-cancellative.
\end{proposition}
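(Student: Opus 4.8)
The plan is to prove the contrapositive-flavored dichotomy directly: take an arbitrary $s \in S$ and show that if $s$ is multiplicatively cancellative, then $s$ must be a unit. So suppose $s$ is multiplicatively cancellative. By $\pi$-regularity, there exist $t \in S$ and a positive integer $n$ with $s^{n+1} t = s^n$. The goal is to extract from this relation a two-sided inverse for $s$, using cancellativity to strip away the powers of $s$ one at a time.

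The key step is the peeling argument. I would rewrite $s^{n+1} t = s^n$ as $s \cdot (s^n t) = s^n = s \cdot s^{n-1}$ (when $n \geq 1$; the case $n=1$ gives $s \cdot (st) = s = s\cdot 1$ directly). Since $s$ is cancellative, this yields $s^n t = s^{n-1}$. Iterating this cancellation $n-1$ more times—each time factoring out one copy of $s$ from both sides and cancelling—I descend through $s^{n-1} t = s^{n-2}$, and so on, until I reach $st = 1$. Because multiplication is commutative, $st = ts = 1$ shows that $t$ is a multiplicative inverse of $s$, so $s$ is a unit. This establishes that every cancellative element is a unit, which is logically equivalent to the claim that every element is either a unit or non-cancellative.

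The one point requiring care, rather than a genuine obstacle, is making the induction clean and handling the base case $n=1$ without off-by-one errors; the iteration is most transparently phrased as a downward induction on the exponent $k$ showing $s^{k+1} t = s^k$ implies $s^k t = s^{k-1}$ for $1 \le k \le n$. I would also note explicitly that cancellativity is being applied to the common left factor $s$, which is legitimate since the paper's definition of multiplicative cancellativity is exactly the statement $s x_1 = s x_2 \implies x_1 = x_2$ with $x_1 = s^k t$ and $x_2 = s^{k-1}$. No deeper structural theory is needed: the whole argument is the interplay between the single $\pi$-regularity relation and repeated cancellation, and commutativity at the end to upgrade the one-sided identity $st = 1$ to a genuine unit.
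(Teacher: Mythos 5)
Your proof is correct and is essentially the paper's argument: the paper also assumes $s$ is cancellative and cancels from $s^{n+1}t = s^n$ to obtain $st = 1$, leaving the repeated cancellation implicit where you spell it out as a downward induction on the exponent. No gap; your version just supplies the routine detail (and the $n=1$ base case) that the paper's one-line proof omits.
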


\begin{proof}
Consider an element $s$ in $S$. If $s$ is multiplicatively cancellative, then from $s^{n+1} t = s^n$, we obtain that $st = 1$ which means that $s$ is a unit. Otherwise, $s$ is multiplicatively non-cancellative and the proof is complete.
\end{proof}

\begin{examples}\label{periodicsemirings} In the following, we give some examples of $\pi$-regular semirings:
	
	\begin{enumerate}
		\item Any finite semiring $S$ is $\pi$-regular because $\{s^n\}_{n \in \mathbb{N}}$ has finitely many elements. Evidently, this implies that there are positive integers $m < n$ such that $s^m = s^n$.
		
		\item Let us recall that a semiring $S$ is (multiplicatively) periodic if for each $s$ in $S$ there are positive integers $m < n$ with $s^m = s^n$ \cite[p. 206]{Maletti2005}. It is clear that multiplicatively periodic semirings are $\pi$-regular.
		
		\item A ring $R$ is a simply periodic ring if for any $x\in R$, there is a positive integer $n > 1$ such that $x^n = x$ (see \cite{Mitchell1973} and \cite{Sussman1958}). Such rings have been discussed by Jacobson proving that any (associative) simply periodic ring is commutative (cf. Theorem 11 in \cite{Jacobson1945}). Similar to ring theory, we say a semiring $S$ is a simply periodic semiring if for any $s \in S$, there is a positive integer $n > 1$ such that $s^n = s$. It is evident that simply periodic semirings are $\pi$-regular. Note that multiplicatively idempotent semirings are examples of simply periodic semirings. 
	\end{enumerate}
	
\end{examples}

\begin{theorem}\label{eitherunitornoncancellative}
	Let $S$ be a semiring and $A$ an $S$-semialgebra. Suppose that the $S$-semimodule $A$ has no proper $S$-subsemimodule that is isomorphic to $A$. Then, each element of $A$ is either a unit or multiplicatively non-cancellative.
\end{theorem}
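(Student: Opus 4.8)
The plan is to reduce the statement to a comparison between $A$ and the principal $S$-subsemimodule obtained as the image of multiplication by a fixed element. Fix $a \in A$. If $a$ is multiplicatively non-cancellative there is nothing to prove, so I would assume throughout that $a$ is multiplicatively cancellative and aim to deduce that $a$ is a unit of $A$.

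First I would introduce the multiplication-by-$a$ map $\mu_a \colon A \to A$ defined by $\mu_a(x) = a \cdot x$. The key point is that $\mu_a$ is an endomorphism of $A$ as an $S$-semimodule: additivity $\mu_a(x+y) = \mu_a(x) + \mu_a(y)$ follows from distributivity in $A$, while $S$-linearity $\mu_a(\alpha x) = \alpha\, \mu_a(x)$ for $\alpha \in S$ is precisely the semialgebra identity $\alpha(a \cdot x) = a \cdot (\alpha x)$ recalled just before the statement. I would also record that the image $\mu_a(A) = aA$ is an $S$-subsemimodule of $A$, being closed under addition and under the scalar action for the same reasons.

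Next I would exploit cancellativity. By the definition of multiplicative cancellativity, $a \cdot x_1 = a \cdot x_2$ forces $x_1 = x_2$, so $\mu_a$ is injective; hence $\mu_a$ induces a bijective $S$-semimodule homomorphism $A \to aA$, and since the inverse of a bijective semimodule homomorphism is again a homomorphism, we obtain an isomorphism $A \cong aA$ of $S$-semimodules. Now I would invoke the hypothesis: the subsemimodule $aA$ is isomorphic to $A$, so it cannot be proper, whence $aA = A$. In particular the multiplicative identity $1 \in A$ lies in $aA$, yielding some $b \in A$ with $a \cdot b = 1$, i.e. $a$ is a unit, which completes the argument.

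The proof is short once the right map is in hand, and the only step requiring genuine care is the verification that $\mu_a$ is $S$-linear, which is exactly where the full strength of the semialgebra axiom (rather than mere semimodule structure) enters. A secondary subtlety to watch is that the phrase ``isomorphic to $A$'' in the hypothesis must be matched precisely by the isomorphism $A \cong aA$ produced from injectivity, so that the non-properness conclusion $aA = A$ is legitimately forced; this is the crux on which the whole argument turns.
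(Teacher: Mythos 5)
Your proposal is correct and follows essentially the same route as the paper's own proof: both define the multiplication map $\mu_a(x) = a\cdot x$, verify it is an $S$-semimodule homomorphism via the semialgebra axiom, use cancellativity to get injectivity and hence $A \cong aA$, and conclude $aA = A$ from the hypothesis so that $1 = ab$ for some $b$. Your extra remarks (that $aA$ is a subsemimodule and that the inverse of a bijective semimodule homomorphism is a homomorphism) simply make explicit what the paper leaves implicit.
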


\begin{proof}
	Assume that $a$ is multiplicatively cancellative. Define $f: A \rightarrow A$ by $f(x) = ax$. First, we prove that $f$ is a semimodule homomorphism (i.e., an $S$-linear map). It is evident that $f$ preserves addition of $A$. Now, let $\alpha$ be an arbitrary element of $S$ and observe that by the properties of the $S$-semialgebra $A$, we have: \[f(\alpha x) = a(\alpha x) = \alpha (ax) = \alpha f(x).\] Now, since $a$ is multiplicatively cancellative, $f$ is injective. This means that $f(A)$ and $A$ are isomorphic $S$-semimodules. Since $f(A) \subseteq A$ and $A$ has no proper $S$-subsemimodule that is isomorphic to $A$, we have $f(A) = A$, i.e., $f$ is surjective. In particular, $f(b) = 1$ for some $b \in A$. This means that $ab = 1$ for some $b\in A$. Thus $a$ is a unit. This completes the proof. 	
\end{proof}

Following the terminology ``J\'{o}nsson algebra'' in \cite{ErdosHajnal1966} and ``J\'{o}nsson module'' in \cite{GilmerHeinzer1983}, we give the following definition:

\begin{definition}\label{Jonssonsemimodule} 
We say an $S$-semimodule $M$ is a J\'{o}nsson $S$-semimodule if the cardinality of $N$ is less than the cardinality of $M$ (i.e., $| N | < | M |$) for each proper $S$-subsemimodule $N$ of $M$. 
\end{definition}

\begin{corollary}\label{Jonssonsemialgebra}
	Let $S$ be a semiring and $A$ an $S$-semialgebra. Suppose that the $S$-semimodule $A$ is a J\'{o}nsson $S$-semimodule. Then, each element of $A$ is either a unit or multiplicatively non-cancellative.
\end{corollary}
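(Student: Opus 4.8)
The plan is to deduce this corollary directly from Theorem~\ref{eitherunitornoncancellative} by verifying that the J\'{o}nsson hypothesis is strictly stronger than the hypothesis appearing there. Concretely, Theorem~\ref{eitherunitornoncancellative} concludes exactly the desired statement under the assumption that $A$ has no proper $S$-subsemimodule isomorphic to $A$; so it suffices to show that a J\'{o}nsson $S$-semimodule automatically satisfies this no-isomorphic-proper-subsemimodule condition. Thus the entire corollary reduces to one cardinality observation.

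The key step is the following elementary argument. Suppose, for contradiction, that $A$ is J\'{o}nsson but possesses a proper $S$-subsemimodule $N \subsetneq A$ with $N \cong A$ as $S$-semimodules. An isomorphism of $S$-semimodules is in particular a bijection, so $|N| = |A|$. On the other hand, since $N$ is a \emph{proper} subsemimodule and $A$ is J\'{o}nsson, the defining property of Definition~\ref{Jonssonsemimodule} gives $|N| < |A|$. These two facts are incompatible, so no such $N$ can exist. Hence $A$ has no proper $S$-subsemimodule isomorphic to $A$, and Theorem~\ref{eitherunitornoncancellative} applies verbatim to yield the conclusion that each element of $A$ is either a unit or multiplicatively non-cancellative.

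I do not anticipate any serious obstacle here, as the proof is a short implication-chaining argument resting on the single fact that isomorphic semimodules have equal cardinality. The only point to state carefully is that the J\'{o}nsson condition is a statement about \emph{all} proper subsemimodules (quantified universally), so it applies in particular to the hypothetical copy $N$ of $A$; this is precisely what forces the contradiction. One might worry about the trivial or degenerate cases (for instance $A$ having cardinality one), but these cause no difficulty: if $A$ is a singleton it has no proper subsemimodules at all, so the J\'{o}nsson hypothesis and the no-isomorphic-copy hypothesis are both vacuously satisfied, and the corollary follows either way.
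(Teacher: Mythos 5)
Your proof is correct and follows essentially the same route as the paper: both deduce the corollary from Theorem~\ref{eitherunitornoncancellative} by noting that an isomorphism is in particular a bijection, so a proper subsemimodule isomorphic to $A$ would violate the cardinality inequality $|N| < |A|$ in Definition~\ref{Jonssonsemimodule}. Your write-up merely makes explicit the cardinality contradiction that the paper states in one line.
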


\begin{proof}
	Because $A$ is a J\'{o}nsson $S$-semimodule, $A$ has no proper $S$-subsemimodule that is isomorphic to $A$. So by Theorem \ref{eitherunitornoncancellative}, each element of $A$ is either a unit or multiplicatively non-cancellative and this completes the proof.
\end{proof}

By definition, a ring $R$ is classical if each element of $R$ is either a unit or a zero-divisor \cite[p. 320]{Lam1999}. Since an element of a ring $R$ is non-cancellative if and only if it is a zero-divisor, we see that each element of a ring $R$ is either a unit or non-cancellative if and only if $R$ is classical. In view of Proposition \ref{piregularpro} and Theorem \ref{eitherunitornoncancellative}, we have the following:

\begin{corollary}\label{finiteringclassical}
A ring $R$ is classical if one of the following statements hold:

\begin{enumerate}
	\item $R$ is finite.
	\item $R$ is simply periodic (cf. \cite{Sussman1958}).
	\item $R$ is periodic (cf. \cite{Chacron1968}).
	\item $R$ is $\pi$-regular (cf. Theorem 3.1 in \cite{Huckaba1988}).
\end{enumerate}

Also, let $B$ be an $R$-algebra. Then, $B$ is classical if $B$ is a J\'{o}nsson $R$-module. In particular, if $K$ is a field and $A$ a finite dimensional $K$-algebra, then $A$ is classical \cite[Theorem 1.2.1]{DrozdKirichenko1994}. 
	
\end{corollary}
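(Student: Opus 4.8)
The plan is to reduce every case to the single statement that each element of the structure is either a unit or multiplicatively non-cancellative, and then to invoke the observation recorded just before the corollary: in a ring an element is non-cancellative precisely when it is a zero-divisor, so ``every non-unit is non-cancellative'' is exactly the assertion that the ring is classical.

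For items (1)--(4) I would route everything through $\pi$-regularity. By Examples \ref{periodicsemirings}, a finite ring, a periodic ring, and a simply periodic ring are all $\pi$-regular: a finite ring has eventually repeating power sequences; the simply periodic condition $x^n = x$ with $n > 1$ is a special case of periodicity; and periodicity $s^m = s^n$ with $m < n$ yields $s^{m+1} s^{n-m-1} = s^m$, which is exactly the $\pi$-regularity relation (taking $t = s^{n-m-1}$, with the convention $s^0 = 1$). Since rings are semirings, Proposition \ref{piregularpro} applies verbatim and shows that in a $\pi$-regular ring every element is a unit or non-cancellative; combined with the non-cancellative $\iff$ zero-divisor equivalence, this gives that $R$ is classical in each of the four cases.

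For the algebra statement, an $R$-algebra is in particular an $R$-semialgebra, so if $B$ is a J\'{o}nsson $R$-module then Corollary \ref{Jonssonsemialgebra} immediately yields that each element of $B$ is a unit or non-cancellative, whence $B$ is classical. The ``in particular'' clause about a finite-dimensional $K$-algebra $A$ needs a little care, and this is where I expect the main obstacle to lie: one is tempted to deduce it from the J\'{o}nsson criterion, but that route fails over an infinite field $K$, since then any proper subspace of dimension $\geq 1$ has the same cardinality $|K|$ as $A$, so $A$ need not be a J\'{o}nsson $K$-module at all. Instead I would apply Theorem \ref{eitherunitornoncancellative} directly, whose hypothesis is only that $A$ has no proper $K$-subsemimodule isomorphic to $A$. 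Viewing $A$ as a finite-dimensional $K$-vector space, a $K$-module isomorphism preserves dimension, so a proper subspace (of strictly smaller dimension) can never be isomorphic to the whole of $A$; the hypothesis of Theorem \ref{eitherunitornoncancellative} is therefore satisfied. Hence each element of $A$ is a unit or non-cancellative, and $A$ is classical, recovering \cite[Theorem 1.2.1]{DrozdKirichenko1994} within the present framework.
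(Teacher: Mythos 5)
Your proposal is correct and follows essentially the same route as the paper: items (1)--(4) via Examples \ref{periodicsemirings} and Proposition \ref{piregularpro} together with the equivalence, in rings, of non-cancellative and zero-divisor; the algebra statement via Corollary \ref{Jonssonsemialgebra}; and the finite-dimensional case via Theorem \ref{eitherunitornoncancellative}. Your explicit warning that the J\'{o}nsson criterion fails over an infinite field identifies precisely the point the paper's terse proof handles by recalling that a subspace of a finite-dimensional $K$-vector space with the same dimension equals the whole space, i.e., the paper likewise routes the ``in particular'' clause through Theorem \ref{eitherunitornoncancellative} rather than through Corollary \ref{Jonssonsemialgebra}.
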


\begin{proof}
For the proof of the last statement, we recall that if $K$ is a field and $A$ is a finite dimensional $K$-vector space and $W$ is a subspace of $V$ with the same dimension, then $W=V$.
\end{proof}

\begin{remark}\label{applicationsclassicalrings} In the literature, classical rings are also called ``rings of quotients'' \cite[p. 320]{Lam1999} or ``full quotient rings'' \cite[\S3]{Ching1977}. Our motivation to study classical rings and their generalizations in semiring theory is their use in mathematics and applications in science and engineering as we explain in the following:
	\begin{enumerate}
		\item Classical rings are discussed in many resources in ring theory \cite[Theorem 3]{Hurley2006}, \cite[Lemma 3]{Gill1971}, \cite[p. 120]{Glaz1989}, \cite[Lemma 1]{NarkiewiczRuengsinsubLaohakosol2004}, and \cite{Stafford1982}. Note that classical rings have applications in other fields of mathematics like Manis valuation theory \cite[Lemma 3]{Zanardo1993}, number theory \cite[Proposition 4.1.8]{BuchmannVollmer2007}, and topological rings of Colombeau's generalized numbers (cf. \cite[Theorem 2.18]{AragonaJuriaans2001} and \cite[Lemma 3.2]{CortesFerreroJuriaans2009}).
		
		\item Classical rings have applications in different areas of science and engineering. For example, classical rings are discussed in multidimensional systems theory \cite[Definition 3.1]{Bose2003}, algebraic shift register sequences \cite[\S A.2.1]{GoreskyKlapper2012}, coding theory \cite{Blake1975,HurleyHurley2009}, system theory \cite{ChingWyman1977}, efficient information-theoretic multi-party computation \cite{EscuderoSoria2021}, and color visual cryptography schemes \cite{DuttaSardarAdhikariRujSakurai2020}.
	
	   \item Finally, classical rings have been discussed in mathematics education resources. For example, by studying rings in which every non-unit is a zero-divisor, students can gain a deeper understanding \cite{Cook2014} of these important mathematical structures, as well as their applications.
	
\end{enumerate}
\end{remark}

Inspired by the definition and properties of classical rings, we proceed to define classical semirings and investigate their properties in the next section:

\section{Classical semirings}\label{sec:classicalsemirings}

We collect the unit elements of a semiring $S$ in $U(S)$ and the zero-divisors in $Z(S)$.

\begin{definition}\label{classicalsemiringsdef}
We define a semiring $S$ to be classical if each element of $S$ is either a unit or a zero-divisor. In other words, a semiring $S$ is classical if $S = U(S) \cup Z(S)$.
\end{definition}

\begin{remark}
It is straightforward to see that if $S$ is a semiring, then $U(S)$ and $Z(S)$ are disjoint. However, this property does not necessarily hold for non-associative semirings, even if they are power-associative. For example, all nonzero elements of sedenions are invertible while infinitely many of them are zero-divisors as well \cite{Cawagas2004}. 
\end{remark}

The following result shows that not all semirings are classical:

\begin{proposition}\label{latticesnotclassical}
	Let $(C,\leq)$ be a chain with at least 3 elements such that $0$ is the smallest and $1$ is the greatest element of $C$. Then, $(C, \max, \min)$ is not a classical semiring though each element of $C$ is either a unit or multiplicatively non-cancellative.
\end{proposition}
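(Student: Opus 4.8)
The plan is to determine, for the semiring $(C,\max,\min)$, exactly which elements are units and which are zero-divisors, and then to contrast both of these (small) sets with the much larger set of non-cancellative elements. First I would record that $(C,\max,\min)$ is indeed a semiring of the kind considered here: a chain is a distributive lattice, so $\max$ and $\min$ are commutative and associative with $\min$ distributing over $\max$; moreover $0$ is the additive identity since $\max(0,x)=x$, the element $1$ is the multiplicative identity since $\min(1,x)=x$, and $0$ is absorbing since $\min(0,x)=0$. This is just the bounded-distributive-lattice example flagged in the introduction, so I would keep this step brief.

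Next I would identify the units. Because $\min(s,t)\le s\le 1$ for all $s,t$, the equation $\min(s,t)=1$ can hold only when $s=1$; conversely $\min(1,1)=1$. Hence $U(C)=\{1\}$, so every element other than $1$ is a non-unit and the whole question reduces to sorting these non-units into zero-divisors and non-cancellative-but-regular elements.

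The crux is the computation of the zero-divisors, and here I would exploit total ordering decisively. Since $\min(s,x)$ is always the smaller of $s$ and $x$, the equality $\min(s,x)=0$ forces $s=0$ or $x=0$. Consequently, if $s\neq 0$ then $\min(s,x)=0$ implies $x=0$, so $s$ is multiplicatively regular in the sense of the paper and therefore \emph{not} a zero-divisor; on the other hand $0$ is a zero-divisor since $\min(0,1)=0$ with $1\neq 0$. Thus $Z(C)=\{0\}$, whence $U(C)\cup Z(C)=\{0,1\}$. As $|C|\ge 3$, there exists $a$ with $0<a<1$, and this $a$ belongs to neither set, so $C$ is not classical.

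Finally, for the positive half of the statement I would show directly that every $s\neq 1$ is multiplicatively non-cancellative by exhibiting a collision: $\min(s,1)=s=\min(s,s)$ while $1\neq s$. Combined with $1\in U(C)$, this yields $C=U(C)\cup\{\,s:\ s \text{ is non-cancellative}\,\}$, proving the second clause. The one genuine subtlety, and the reason this example is worth recording, is that in a semiring the notion of zero-divisor (annihilating some nonzero element) and that of non-cancellativity (collapsing two distinct elements under multiplication) come apart: each intermediate $a$ is non-cancellative yet regular, precisely because $\min$ of a positive element with anything nonzero can never be $0$. This is the only place where care is needed; everything else is the bookkeeping above, and the upshot is an explicit witness that condition $(3)$ does not imply condition $(2)$.
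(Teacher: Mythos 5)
Your proof is correct, and for the negative half it coincides with the paper's: for $0<a<1$ the chain structure forces $\min(a,b)=0$ to imply $b=0$ and $\min(a,b)=1$ to imply $a=b=1$, so $a$ is neither a zero-divisor nor a unit. Your extra bookkeeping that $U(C)=\{1\}$ and $Z(C)=\{0\}$ (correctly counting $0$ as a zero-divisor under this paper's definition, since $0$ fails regularity) is a mild sharpening of the same idea. Where you genuinely diverge is the positive half. The paper notes that $C$ is multiplicatively idempotent, hence simply periodic, hence $\pi$-regular, and then invokes Proposition~\ref{piregularpro}; you instead exhibit the explicit cancellation failure $\min(s,1)=s=\min(s,s)$ with $1\neq s$ for every $s\neq 1$. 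The computational core is the same --- inside the proof of Proposition~\ref{piregularpro}, cancelling in $s\cdot s=s\cdot 1$ is exactly what would force $s=1$ --- but your version is self-contained and makes the moral of the example explicit (in a semiring, ``regular'' and ``cancellative'' come apart, so every intermediate $a$ is non-cancellative yet not a zero-divisor), whereas the paper's version buys economy and situates the example inside the $\pi$-regularity framework it has just developed, which is then reused elsewhere (e.g.\ in Corollary~\ref{finiteringclassical}). Both routes fully establish the statement.
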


\begin{proof}
Let $a \in C \setminus \{0,1\}$. Firstly, $a$ is not a zero-divisor because if $ab = 0$, then $b = 0$. On the other hand, if $ab = 1$, then $a = b = 1$. So, $a$ cannot be a unit. Thus $C$ is not classical. However, note that $C$ is multiplicatively idempotent, and so, $C$ is simply periodic. By Proposition \ref{piregularpro}, this implies that each element of $C$ is either a unit or multiplicatively non-cancellative. This completes the proof.
\end{proof} 

By Corollary \ref{finiteringclassical}, any finite ring is classical. Surprisingly, it turns out that this is not the case in semiring theory.

\begin{examples}\label{nonclassicalpropersemirings}
	In the following, we give some examples for finite proper semirings that are not classical:
	
	\begin{enumerate}
		
		\item Let us recall the definition of Hu's semiring given in \cite{Hu1975}. Let $H = \{0, u, 1\}$ be totally ordered by $0 < u < 1$ and define $\oplus$ on $S$ as \[a \oplus b = \max\{a,b\},\] except the case $a = b = 1$, where $1 \oplus 1 = u$. Then, it is easy to see that $(H, \oplus, \min)$ is a semiring \cite[Example 1.17]{Golan1999}. Since $u^2 = u$, $u$ is neither a unit nor a zero-divisor. This means that Hu's semiring is not classical.
		
		\item Let us recall the definition of LaGrassa's semiring given in \cite{LaGrassa1995}. Define the binary operations $\oplus$ and $\odot$ on $L = \{0,u,1\}$ in a way that they are both idempotent and we have the following: \[1 \oplus u = u \oplus 1 = u.\] It is easy to see that $(L, \oplus, \odot)$ is a semiring \cite[Example 6.4]{Golan1999}. Since $u^2 = u$, the element $u$ is neither a unit nor a zero-divisor. Therefore, LaGrassa's semiring is not classical.
		
		\item Let $X_n = \{-\infty, 0,1,\dots,n\}$, where $n$ is a positive integer and assume that $-\infty$ is the smallest element of $X_n$ such that \[-\infty + s = -\infty \qquad \forall~s\in X_n.\] Define addition and multiplication on $X_n$ as follows: \[x+y = \max\{x,y\} \text{~and~} xy = \min\{x+y,n\}.\] Then, $X_n$ equipped with above operations is a semiring (see \cite[Example 1.8]{Golan1999} and \cite[Example 1.9]{Smith1966}) such that its zero and one are $-\infty$ and 0, respectively. It is, now, easy to verify that $Z(X_n) = \{-\infty\}$ and $U(X_n) = \{0\}$. Since $n$ is a positive integer, $X_n$ has at least three elements and $X_n$ contains at least one element that is neither a unit and nor a zero-divisor. Consequently, for any positive integer $n$, the semiring $X_n$ is finite and proper but not classical. 
	\end{enumerate}
\end{examples}

There are some finite and proper semirings that are classical also. For example, the Boolean semiring $\mathbb{B} = \{0,1\}$ is a finite and proper semiring that is also classical. Now, we proceed to find more examples for classical proper semirings of finite order.

Assume that $n$ and $i$ are integer numbers such that $n \geq 2$, $0 \leq i \leq n-1$, and set \[B(n,i) = \{0,1,\dots, n-1\}.\] Define binary operations $\oplus$ and $\otimes$ on $B(n,i)$ as follows: \begin{equation*}
	x \oplus y =
	\begin{cases}
		x+y, &         \text{if } x+y \leq n-1,\\
		x+y \pmod {n-i}, &         \text{otherwise} 
	\end{cases}
\end{equation*}
and 
\begin{equation*}
	x \otimes y =
	\begin{cases}
		xy, &         \text{if } xy \leq n-1,\\
		xy \pmod {n-i}, &         \text{otherwise} 
	\end{cases}.
\end{equation*}

It is, then, easy to see that $(B(n,i), \oplus, \otimes)$ is a semiring \cite[Example 1.8]{Golan1999}. Note that the semiring $B(n,0)$ is the ring $\mathbb{Z}_n$ and $B(n,i)$ is a proper semiring if $1 \leq i \leq n-1$. As a generalization to this fact that $\mathbb{Z}_n$ is a classical ring, we show that $B(n,i)$ is also a classical semiring:

\begin{theorem}\label{afamilyfinitepropersemirings}
	Let $n\geq 2$ be a positive integer and $0 \leq i \leq n-1$. Then, $B(n,i)$ is a classical semiring. In particular, if $i > 0$, then $B(n,i)$ is an example of the family of finite proper classical semirings.
\end{theorem}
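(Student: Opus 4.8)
The plan is to treat the ring case separately and then localize the difficulty to the idempotents of $B(n,i)$.

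First I would dispose of $i=0$: here $B(n,0)$ is literally the ring $\mathbb{Z}_n$, so it is a finite ring and hence classical by Corollary \ref{finiteringclassical}. From now on assume $1\le i\le n-1$ and write $m=n-i$ for the modulus controlling the overflow reduction. Because $B(n,i)$ is finite it is $\pi$-regular by Examples \ref{periodicsemirings}, so Proposition \ref{piregularpro} already gives that every element is a unit or multiplicatively non-cancellative. The whole content of the theorem is therefore to upgrade ``non-cancellative'' to the genuinely stronger ``zero-divisor'': for each non-unit $a$ I must produce a nonzero $b$ with $a\otimes b=0$. This is exactly the step that fails for the chains of Proposition \ref{latticesnotclassical} and for the semirings of Examples \ref{nonclassicalpropersemirings}, so it is where all the real work lies.

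Next I would use finiteness to replace $a$ by an idempotent. In the finite commutative monoid $(B(n,i),\otimes)$ the submonoid generated by $a$ contains an idempotent $e=a^{k}$ for a suitable $k\ge 1$. If $e=1$ then $a\otimes a^{k-1}=1$ and $a$ is a unit, so assume $e\neq 1$. I would then argue that $a$ inherits zero-divisorhood from $e$: if $e\otimes c=0$ with $c\neq 0$, then along the chain $c,\ a\otimes c,\ \dots,\ a^{k}\otimes c=e\otimes c=0$ there is a least index $j\ge 1$ with $a^{j}\otimes c=0$, whence $a^{j-1}\otimes c\neq 0$ is a nonzero annihilator of $a$. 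Together with the trivial fact that $0\in Z(B(n,i))$ (as $|B(n,i)|\ge 2$), this reduces the theorem to a single claim: \emph{every idempotent $e\in B(n,i)$ with $e\notin\{0,1\}$ is a zero-divisor}.

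Finally I would analyze the nontrivial idempotents arithmetically. Such an $e$ cannot come from the non-overflow branch, since $e\otimes e=e^2=e$ with $e^2\le n-1$ forces $e\in\{0,1\}$; hence $e^2>n-1$ and the reduced product equals $e$, i.e.\ $e^2\equiv e\pmod m$, so $m\mid e(e-1)$. Writing $d=\gcd(e,m)$ I would look for a witness $b$ built from the modulus (a suitable multiple of $m/d$) whose product with $e$ is forced onto $0$, thereby exhibiting $e\in Z(B(n,i))$. Properness for $i>0$ is separate and easy: $1\oplus 1$ has no additive inverse once $i\ge 1$, so $B(n,i)$ is not a ring. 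The hard part will be precisely this last arithmetic step. In the ring case non-cancellative and zero-divisor coincide and finiteness alone closes the argument, whereas here one must genuinely exploit how reduction modulo $m=n-i$ interacts with divisibility to push some nonzero product onto $0$; the fact that this can fail is exactly what produces the non-classical finite semirings of Examples \ref{nonclassicalpropersemirings}, so I would expect to spend most of the effort determining which divisibility relations among $e$, $n$ and $m$ guarantee the existence of the annihilator.
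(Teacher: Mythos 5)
Your reductions are all individually correct --- the $i=0$ ring case via Corollary \ref{finiteringclassical}, the extraction of an idempotent $e=a^{k}$ from the finite multiplicative monoid, and the least-index transfer of an annihilator of $e$ back to an annihilator of $a$ --- but the proposal stops exactly where the theorem's content begins: you never produce the annihilator, and you say so yourself (``the hard part will be precisely this last arithmetic step''). As it stands this is a reduction, not a proof. Moreover, the idempotent detour turns out to be unnecessary: the paper's proof exhibits, for \emph{every} $x$ with $2\le x\le n-1$, one uniform witness built from the modulus, namely the element $n-i$ itself (nonzero in $B(n,i)$ since $i>0$). If $x(n-i)>n-1$, the overflow rule reduces the product modulo $n-i$ and $x(n-i)\equiv 0\pmod{n-i}$, so $x\otimes(n-i)=0$; if $x(n-i)\le n-1$, take the least $m>1$ with $x^{m}(n-i)>n-1$, so that $b=x^{m-1}(n-i)$ is still a legitimate nonzero element of $B(n,i)$ and $x\otimes b=0$. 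No analysis of $d=\gcd(e,n-i)$ is needed: any overflowing multiple of $n-i$ collapses to $0$ regardless of divisibility relations. This gives strictly more than classicality, namely $U(B(n,i))=\{1\}$ for $i>0$, and properness is then as you say.

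One caveat worth recording: your instinct that the existence of the annihilator is sensitive to how the overflow reduction is normalized is sound, and it is the real content of your deferred step. The paper's displayed operations reduce an overflowing value to its least residue modulo $n-i$, and the argument above is tied to that convention, under which multiples of $n-i$ land on $0$. If instead one reduces into the window $\{i,\dots,n-1\}$, as in Golan's congruence description of $B(n,i)$, then an overflowing multiple of $n-i$ lands on the nonzero idempotent of the window, and for $i\ge 1$ no product of nonzero elements is ever $0$; your nontrivial idempotent (e.g., $e=2$ in $B(3,1)$, where $2\otimes 2=2$) would then be neither a unit nor a zero-divisor. So under that reading the ``hard arithmetic step'' you deferred is not merely hard but impossible, and your proposal can only be completed by first committing, as the paper's proof implicitly does, to the least-residue convention and then taking the witness to be an overflowed multiple of $n-i$.
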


\begin{proof}
	Note that $B(n,0) = \mathbb{Z}_n$ is a classical ring \cite[Examples 11.6]{Lam1999}. Now, let $i > 0$ and $2 \leq x \leq n-1$ be an element of $B(n,i)$. If $x(n-i) > n-1$, then \[x(n-i) \equiv 0 \pmod {n-i}.\] If not, then we can find a suitable positive integer $m > 1$ such that $x^m(n-i) > n-1$, and so, \[x (x^{m-1}(n-i)) \equiv 0 \pmod {n-i}.\] This shows that each element of $B(n,i)$ except 1 is a zero-divisor. This implies that the only unit of $B(n,i)$ is the element 1. Thus $B(n,i)$ is a classical semiring, and it is proper if $i>0$. This completes the proof.
\end{proof}

The following result gives plenty of examples for classical semirings:

\begin{theorem}\label{directproductclassicalsemirings}
	Let $\Lambda$ be an index set and $S_i$ be a classical semiring for each $i \in \Lambda$. Then, $\prod_{i \in \Lambda} S_i$ is a classical semiring.
\end{theorem}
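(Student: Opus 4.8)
The plan is to argue coordinatewise, exploiting the fact that both units and zero-divisors in a direct product are detected on the factors. Write $S = \prod_{i \in \Lambda} S_i$ and fix an arbitrary element $s = (s_i)_{i \in \Lambda}$ of $S$. The whole argument rests on a single dichotomy: either every coordinate $s_i$ is a unit of $S_i$, or at least one coordinate fails to be a unit.

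In the first case I would invoke the standard fact that $s$ is a unit of $S$ precisely when each $s_i$ is a unit of $S_i$. Indeed, if $t_i$ is an inverse of $s_i$ in $S_i$ for every $i$, then $t = (t_i)_{i \in \Lambda}$ satisfies $s t = (s_i t_i)_{i \in \Lambda} = (1_{S_i})_{i \in \Lambda} = 1_S$, so $s \in U(S)$.

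In the second case, suppose $s_j \notin U(S_j)$ for some index $j$. Since $S_j$ is classical, $s_j$ must be a zero-divisor of $S_j$; that is, $s_j$ is not multiplicatively regular, so there is a nonzero $y \in S_j$ with $s_j y = 0_{S_j}$. I would then promote $y$ to a witness in $S$ by setting $x = (x_i)_{i \in \Lambda}$ with $x_j = y$ and $x_i = 0_{S_i}$ for all $i \neq j$. This $x$ is nonzero in $S$ because its $j$-th coordinate is nonzero, while $s x = 0_S$ since $s_j x_j = s_j y = 0_{S_j}$ and $s_i x_i = s_i \cdot 0_{S_i} = 0_{S_i}$ for $i \neq j$. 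Hence $s$ is not regular, i.e. $s \in Z(S)$.

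Combining the two cases yields $S = U(S) \cup Z(S)$, which is exactly classicality. I do not expect a genuine obstacle here; the only point demanding care is the bookkeeping around the semiring notion of zero-divisor. In this paper a zero-divisor is an element that fails to be multiplicatively regular, so exhibiting a \emph{single} nonzero annihilator of $s$ suffices, and the embedding that places the factor-level witness $y$ in coordinate $j$ and zero elsewhere is precisely what keeps that annihilator nonzero in the product.
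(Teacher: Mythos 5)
Your proposal is correct and follows essentially the same route as the paper: both split on whether every coordinate is a unit, use $U\bigl(\prod_{i \in \Lambda} S_i\bigr) = \prod_{i \in \Lambda} U(S_i)$ (which you verify in the direction needed, while the paper states it as clear), and in the non-unit case embed a factor-level annihilator into the offending coordinate with zeros elsewhere to witness that the element is a zero-divisor. Your closing remark about the semiring notion of zero-divisor as failure of multiplicative regularity is exactly the right bookkeeping, and nothing further is needed.
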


\begin{proof}
	It is clear that \[U\left(\prod_{i \in \Lambda} S_i\right) = \prod_{i \in \Lambda} U(S_i).\] We will be done if we prove that if at least one of the components of $(s_i)$ is not a unit, then $(s_i)$ is a zero-divisor. Assume that $s_{i_0}$ is not a unit in $S_{i_0}$ for some $i_0 \in \Lambda$. Therefore, by assumption, $s_{i_0}$ is a zero-divisor. This implies that there is a nonzero element $z_{i_0}$ in $S_{i_0}$ such that $s_{i_0} z_{i_0} = 0$. Now, define $(z_i)$ in $\prod_{i \in \Lambda} S_i$ such that $z_i = 0$ if $i \neq i_0$. It is clear that $(z_i)$ is nonzero in $\prod_{i \in \Lambda} S_i$ and $(s_i) (z_i) = (0)$. This shows that if an element in $\prod_{i \in \Lambda} S_i$ is not a unit, then it is a zero-divisor, and so, $\prod_{i \in \Lambda} S_i$ is classical and the proof is complete.
\end{proof}

Let us recall that an element $s$ of a semiring $S$ is complemented if there is an element $s^*$ such that $ss^* = 0$ and $s+s^* = 1$. A semiring $S$ is complemented if each element of $S$ is complemented \cite[\S5]{Golan1999}.

\begin{theorem}\label{complementedsemiringclassical}
Any complemented semiring is classical. 
\end{theorem}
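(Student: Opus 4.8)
The plan is to argue elementwise, exploiting the defining data of a complemented semiring directly. Fix an arbitrary $s \in S$; since $S$ is complemented there exists $s^* \in S$ with $s s^* = 0$ and $s + s^* = 1$. I would then split into two cases according to whether or not the chosen complement $s^*$ vanishes, and show that in the first case $s$ is forced to be a unit while in the second case it is automatically a zero-divisor. This dichotomy is exhaustive, so every element will land in $U(S) \cup Z(S)$, which is exactly the assertion that $S$ is classical in the sense of Definition \ref{classicalsemiringsdef}.

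For the case $s^* = 0$, I would simply substitute into the complementation relation $s + s^* = 1$ to get $s = s + 0 = 1$, so that $s$ is a unit. For the case $s^* \neq 0$, I would invoke the relation $s s^* = 0$: here $s^*$ is a nonzero element annihilated by $s$, so $s$ fails to be multiplicatively regular (it violates the implication $s x = 0 \implies x = 0$ by taking $x = s^* \neq 0$), and hence $s$ is a zero-divisor by the definition recalled in the introduction. Combining the two cases shows $s \in U(S) \cup Z(S)$, and since $s$ was arbitrary the semiring is classical.

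There is essentially no substantive obstacle here; the only point requiring a moment of care is making sure the case split is genuinely complete and that the zero-divisor condition is read correctly. In particular, the dichotomy ``$s^* = 0$ versus $s^* \neq 0$'' covers all possibilities once a complement has been chosen, and the element $s = 0$ itself poses no problem: its complement is $s^* = 1$, which is nonzero whenever $S$ is nontrivial, so $0$ falls into the zero-divisor case via $0 \cdot 1 = 0$. (In the degenerate semiring with $0 = 1$ every element is a unit, so the statement holds trivially as well.) Thus the argument is a short, direct verification with no hidden difficulty.
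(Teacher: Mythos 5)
Your proof is correct and is essentially the paper's own argument: the paper splits on $s \neq 1$ versus $s = 1$ and deduces $s^* \neq 0$ in the first case, which is just the contrapositive of your case split on $s^* = 0$ versus $s^* \neq 0$. Your extra remarks on $s = 0$ and the degenerate semiring with $0 = 1$ are harmless added care, not a difference in method.
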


\begin{proof}
Let $s \neq 1$ be an element of a complemented semiring $S$. Then, there is an element $s^*$ such that $ss^* = 0$ and $s+s^* = 1$. Since $s \neq 1$, $s^*$ needs to be nonzero. It follows that if $s \neq 1$, then $s$ is a zero-divisor. Since $1$ is a unit, we see that $S$ is classical and the proof is complete.
\end{proof}

Sarah Glaz finds some classical rings that are also reduced \cite[p. 120]{Glaz1989}. Let us recall that a semiring is nilpotent-free if 0 is its only nilpotent element. Now, we proceed to find some nilpotent-free classical proper semirings.

\begin{corollary}\label{booleanalgebraclassical}
Any Boolean algebra is a nilpotent-free classical semiring. 
\end{corollary}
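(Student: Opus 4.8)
The plan is to separate the statement into its two assertions---that a Boolean algebra is classical, and that it is nilpotent-free---and to prove each independently, deriving the first directly from Theorem \ref{complementedsemiringclassical}.

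First I would fix the semiring structure. A Boolean algebra $(B, \vee, \wedge, 0, 1)$ is in particular a bounded distributive lattice, hence a semiring in the sense of this paper once we take the join $\vee$ as addition and the meet $\wedge$ as multiplication, with bottom $0$ and top $1$ as the additive and multiplicative identities. The feature I want to exploit is that every element $a \in B$ admits a complement $a'$ with $a \wedge a' = 0$ and $a \vee a' = 1$. Read through the semiring operations, these two identities say precisely that $a \cdot a' = 0$ and $a + a' = 1$, which is exactly the condition for $a$ to be complemented in the sense recalled just before Theorem \ref{complementedsemiringclassical}. Thus every element of $B$ is complemented, so $B$ is a complemented semiring, and Theorem \ref{complementedsemiringclassical} immediately yields that $B$ is classical.

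For the nilpotent-free claim I would invoke the multiplicative idempotence of a Boolean algebra: since $a \wedge a = a$, we have $a^n = a$ for every $a \in B$ and every positive integer $n$. Hence if $a^n = 0$ for some $n \geq 1$, then $a = a^n = 0$, so $0$ is the only nilpotent element and $B$ is nilpotent-free.

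I do not expect a genuine obstacle here. The only point that requires a moment of care is the translation of the lattice-theoretic complement into the semiring notion of a complemented element; once that dictionary is in place, the first half follows from Theorem \ref{complementedsemiringclassical} and the second half from the idempotence of meet.
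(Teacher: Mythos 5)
Your proposal is correct and follows essentially the same route as the paper: it deduces classicality from Theorem \ref{complementedsemiringclassical} by observing that every element of a Boolean algebra is complemented in the semiring sense, and deduces nilpotent-freeness from multiplicative idempotence ($b^2 = b$). The only difference is that you spell out the dictionary between lattice complements and complemented semiring elements, which the paper leaves implicit.
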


\begin{proof}
By Theorem \ref{complementedsemiringclassical}, any Boolean algebra $B$ is classical. On the other hand, $b^2 = b$, for each $b \in B$. So, $B$ is nilpotent-free and this completes the proof.
\end{proof}

\begin{proposition}\label{directproductnilpotentfreeclassical}
	The direct product of classical nilpotent-free semirings is a classical nilpotent-free semiring.  
\end{proposition}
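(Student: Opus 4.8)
The plan is to combine two results already proven in this excerpt. Theorem~\ref{directproductclassicalsemirings} tells us that a direct product of classical semirings is classical, so the ``classical'' half of the conclusion is immediate once we know each factor is classical. The only genuine content is therefore to verify that the direct product of nilpotent-free semirings is again nilpotent-free. So the proof splits cleanly: invoke Theorem~\ref{directproductclassicalsemirings} for the classical part, and then argue the nilpotent-free part directly by a componentwise computation.

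For the nilpotent-free part, write $S = \prod_{i \in \Lambda} S_i$ where each $S_i$ is nilpotent-free. Suppose $(s_i) \in S$ is nilpotent, so that $(s_i)^n = (0)$ for some positive integer $n$. Since multiplication in a direct product is computed componentwise, this says $s_i^n = 0$ in $S_i$ for every $i \in \Lambda$. Each $s_i$ is then nilpotent in the nilpotent-free semiring $S_i$, forcing $s_i = 0$ for all $i$, and hence $(s_i) = (0)$. Thus the only nilpotent element of $S$ is zero, i.e.\ $S$ is nilpotent-free.

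I do not anticipate a real obstacle here; the statement is essentially a bookkeeping corollary of the direct-product theorem together with the trivial observation that nilpotency in a product is detected componentwise. The one point worth stating carefully is that $(s_i)^n = (s_i^n)$, which is where the componentwise definition of multiplication in $\prod_{i \in \Lambda} S_i$ is used; everything else follows formally. If anything, the mild subtlety is purely notational --- keeping the index set $\Lambda$ arbitrary (possibly infinite) causes no difficulty, since the argument is pointwise in $i$ and never requires finiteness of $\Lambda$.
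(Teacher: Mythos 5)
Your proposal is correct and matches the paper's proof essentially verbatim: both invoke Theorem~\ref{directproductclassicalsemirings} for the classical part and then check nilpotent-freeness componentwise, deducing $s_i^n = 0$ and hence $s_i = 0$ for each $i$. Your remark that the argument is pointwise in $i$ and thus indifferent to the cardinality of $\Lambda$ is a fair, if implicit, point in the paper's version as well.
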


\begin{proof}
	It is clear that by Theorem \ref{directproductclassicalsemirings}, the direct product of classical semirings is classical. On the other hand, let $(s_i)$ be an element of the direct product of nilpotent-free semirings and assume that $(s_i)^n = (0)$. Therefore, $s^n_i = 0$ for each $i$, and so, $s_i = 0$, for each $i$. This means that the direct product of nilpotent-free semirings is nilpotent-free. This completes the proof.
\end{proof}

\begin{example} We give some examples of nilpotent-free proper semirings that are also classical. Let $\mathbb{B} = \{0,1\}$ be the Boolean semiring and $\{F_i\}_{i\in \Lambda}$ a family of fields. Then, \[S = \mathbb{B} \times \prod_{i\in \Lambda} F_i\] is a proper classical nilpotent-free semiring. Note that if $\Lambda$ is finite and each $F_i$ is a finite field, then $\mathbb{B} \times \prod_{i\in \Lambda} F_i$ is also finite.
\end{example}

Let us recall that the algebra of dual numbers is a 2-dimensional real algebra whose elements are of the form $a+b \epsilon$, where $a$ and $b$ are real numbers and $\epsilon^2 = 0$. The algebra of dual numbers is a powerful mathematical tool for kinematic and dynamic analysis of spatial mechanisms \cite{PennestriStefanelli2007}. We define generalized dual number algebras over an arbitrary ring as follows:

\begin{definition}\label{generalizeddualnumbers}
	Let $R$ be a ring and $M= \{0,1\} \cup \{s_i\}_{i=1}^n$ be a monoid with zero such that $0$ is its absorbing element and \[s_i s_j = 0, \qquad~\forall~i,j.\] Write elements of the generalized dual number algebra $R[S]$ over $R$ as follows: \[a_0 + \sum_{i=1}^n a_i s_i.\] In $R[S]$, let addition be component-wise and define multiplication as follows: \[\left(a_0 + \sum_{i=1}^n a_i s_i\right) \left(b_0 + \sum_{i=1}^n b_i s_i\right) = a_0 b_0 + \sum_{i=1}^n (a_0 b_i + a_i b_0)s_i.\]
\end{definition} 

\begin{theorem}\label{generalizeddualnumberalgebraclassical} 
	Let $R$ be a classical ring. The generalized dual number algebra $R[S]$ over $R$ defined in Definition \ref{generalizeddualnumbers} is a classical ring.
\end{theorem}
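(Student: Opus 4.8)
The plan is to exploit the fact that classicality of $R[S]$ is governed entirely by the constant term $a_0$, because the remaining part $\nu = \sum_{i=1}^n a_i s_i$ is a square-zero nilpotent: indeed $\nu^2 = \sum_{i,j} a_i a_j\, s_i s_j = 0$ by the defining relation $s_i s_j = 0$. So I would fix an arbitrary element $x = a_0 + \nu$ of $R[S]$ and first record two preliminary observations: since the multiplication of a semiring, hence of the ring $R$, is commutative, $R$ is a commutative ring; and since elements of $R[S]$ are represented uniquely by their coordinate tuple $(a_0, a_1, \dots, a_n)$, an element of $R[S]$ vanishes if and only if all its coordinates vanish. (If $n = 0$ then $R[S] \cong R$ and the result is immediate from the hypothesis, so I may assume $n \geq 1$.)

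For the unit case, suppose $a_0 \in U(R)$. Put $w = a_0^{-1}\nu$, so that $w^2 = (a_0^{-1})^2 \nu^2 = 0$ and $x = a_0(1+w)$. Then $(1+w)(1-w) = 1 - w^2 = 1$, so $x$ is invertible with inverse $(1-w)a_0^{-1} = a_0^{-1} - \sum_{i=1}^n a_i a_0^{-2}\, s_i$, which one can also check directly against the stated multiplication formula. Hence $x$ is a unit whenever $a_0$ is.

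For the zero-divisor case, suppose $a_0 \notin U(R)$. Because $R$ is classical, $a_0$ is a zero-divisor, so there is a nonzero $c \in R$ with $a_0 c = 0$; this includes the degenerate possibility $a_0 = 0$, where one simply takes $c = 1$. Set $y = c\, s_1$, which is nonzero in $R[S]$ since its $s_1$-coordinate $c$ is nonzero. Applying the multiplication rule to $x$ and $y$ (whose only nonzero coordinate is $b_1 = c$) gives $x y = \sum_{i=1}^n a_0 b_i\, s_i = a_0 c\, s_1 = 0$, so $x$ is a zero-divisor.

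Combining the two cases, every element of $R[S]$ is a unit or a zero-divisor, whence $R[S]$ is classical. The argument is essentially routine, and I do not expect a genuine obstacle; the only step that uses the hypothesis is the zero-divisor case, where classicality of $R$ supplies the annihilator $c$ of $a_0$ and the relation $s_i s_j = 0$ lets it be promoted to an annihilator of the whole of $x$. The mild care required is to confirm that $c s_1$ is nonzero and that the constant-term dichotomy (unit versus zero-divisor in $R$) transfers cleanly to $R[S]$, which it does precisely because $\nu$ is square-zero.
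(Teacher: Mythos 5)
Your proposal is correct and follows essentially the same route as the paper: you split on whether the constant term $a_0$ is a unit or a zero-divisor of $R$, exhibit the same inverse $a_0^{-1} - \sum_{i=1}^n a_i a_0^{-2} s_i$ in the first case, and produce an explicit annihilator in the second. The only cosmetic differences are that you derive the inverse via the square-zero element $w$ with $(1+w)(1-w)=1$ rather than verifying it directly, and you use the single-coordinate annihilator $c\,s_1$ where the paper uses $\sum_{i=1}^n x\, s_i$; your explicit treatment of the degenerate case $a_0 = 0$ is a small point of extra care the paper leaves implicit.
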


\begin{proof}
	Consider the generalized dual number $a_0 + \sum_{i=1}^n a_i s_i$, where $a_i$s are arbitrary elements of the classical ring $R$. An easy computation shows that if $a_0$ is invertible, then the multiplicative inverse of $a_0 + \sum_{i=1}^n a_i s_i$ is computed as follows: \[\frac{1}{a_0 + \sum_{i=1}^n a_i s_i} = \frac{1}{a_0} + \sum_{i=1}^n \frac{-a_i}{a^2_0}s_i.\] On the other hand, if $a_0$ is a zero-divisor, then there is a nonzero element $x$ in $R$ such that $a_0x = 0$. Now, observe that \[ \left(a_0 + \sum_{i=1}^n a_i s_i\right) \cdot \left(\sum_{i=1}^n x s_i\right) = 0. 
	\] Thus the generalized dual numbers $R[S]$ over $R$ is a classical ring and the proof is complete.
\end{proof}

A nonempty subset $I$ in a semiring $S$ is an ideal of $S$ if $a+b \in I$ and $sa \in I$, for all $a, b \in I$ and $s \in S$. An ideal $I$ is proper if $I \neq S$. A proper ideal $P$ of $S$ is prime if $ab \in P$ implies that either $a \in P$ or $b \in P$, for all $a,b \in S$. All prime ideals of a semiring $S$ is collected in $\Spec(S)$. Maximal ideals of a semiring $S$ is collected in $\Max(S)$. Note that if $S$ is a semiring, then $\Max(S)$ is a nonempty subset of $\Spec(S)$ (see Proposition 6.59 and Corollary 7.13 in \cite{Golan1999}). Similar to ring theory, if one collects all nilpotent element of a semiring $S$ in a set $\Nil(S)$, then $\Nil(S)$ is the intersection of all prime ideals of the semiring $S$ \cite[Proposition 7.28]{Golan1999}.

\begin{lemma}\label{metasemifieldpro}
	Let $S$ be a semiring. Then, the following statements are equivalent:
	
	\begin{enumerate}
		\item Each element of $S$ is either a unit or a nilpotent;
		
		\item $\Nil(S)$ is the only prime ideal of $S$;
		
		\item $\Nil(S)$ is a maximal ideal of $S$. 
	\end{enumerate}
\end{lemma}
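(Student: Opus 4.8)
The plan is to prove the three statements equivalent by establishing the cycle $(1)\Rightarrow(2)\Rightarrow(3)\Rightarrow(1)$, using throughout the fact recalled just before the lemma that $\Nil(S)$ is the intersection of all prime ideals of $S$.

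For $(1)\Rightarrow(2)$, I would first argue that $\Nil(S)$ is itself a prime (indeed maximal) ideal under hypothesis (1). The key point is that every element outside $\Nil(S)$ is a unit: if $s\notin\Nil(S)$ then $s$ is not nilpotent, so by (1) it must be a unit. Thus the non-nilpotent elements are exactly the units, and $\Nil(S)$ is precisely the set of non-units. I would then check that $\Nil(S)$ is an ideal (closure under addition and under multiplication by arbitrary elements of $S$) and that $S\setminus\Nil(S)=U(S)$ forces $\Nil(S)$ to be the unique maximal ideal: any proper ideal avoids units, hence is contained in $\Nil(S)$. Since $\Nil(S)$ is the intersection of all primes and is itself contained in every prime, every prime must contain it; but every prime is proper and therefore contained in $\Nil(S)$, so $\Nil(S)$ is the \emph{only} prime ideal. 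This simultaneously gives $(2)$ and $(3)$.

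The implication $(2)\Rightarrow(3)$ is essentially immediate: if $\Nil(S)$ is the only prime ideal, then since $\Max(S)\subseteq\Spec(S)$ is nonempty, the only candidate for a maximal ideal is $\Nil(S)$ itself, so $\Nil(S)\in\Max(S)$. I would spell out that $\Nil(S)$ is proper (it cannot contain $1$, as $1$ is not nilpotent in a semiring with $0\neq 1$, the degenerate case being handled separately or trivially), and that being the unique prime it is in particular maximal.

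For $(3)\Rightarrow(1)$, suppose $\Nil(S)$ is a maximal ideal and take any $s\in S$ that is not nilpotent, i.e.\ $s\notin\Nil(S)$. I want to show $s$ is a unit. The natural route is to consider the ideal generated by $s$ together with $\Nil(S)$: by maximality, $(s)+\Nil(S)=S$, so $1=ts+n$ for some $t\in S$ and $n\in\Nil(S)$. The obstacle here is that in semirings one cannot subtract, so I cannot directly rearrange to isolate $ts=1-n$ and then invert $1-n$ via a geometric series as one would in a ring. The anticipated main difficulty is therefore extracting invertibility of $s$ from the relation $1=ts+n$ purely semiring-theoretically. I expect the resolution to use that $n$ is nilpotent, say $n^k=0$, so that $1-n$ would be a unit if subtraction were available; in the semiring setting I would instead argue that $ts+n=1$ with $n$ nilpotent forces $ts$ to be a unit by a direct computation exploiting $n^k=0$ (for instance, multiplying the relation appropriately and using absorption of high powers of $n$), and then conclude that $s$ itself is a unit. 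Making this extraction rigorous without additive cancellation is the crux, and it is where I would spend the most care; once $s$ is shown to be a unit, every non-nilpotent element is a unit, which is exactly $(1)$.
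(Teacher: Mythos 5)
Your proof is correct, but in the key implication $(3)\Rightarrow(1)$ it takes a genuinely different route from the paper. The paper works entirely at the level of the ideal lattice and never computes with elements: for $(1)\Rightarrow(2)$ it combines $\Nil(S)=\bigcap_{\mathfrak p\in\Spec(S)}\mathfrak p$ with Golan's characterization $U(S)=S\setminus\bigcup_{\mathfrak m\in\Max(S)}\mathfrak m$ to deduce $\bigcap_{\mathfrak p\in\Spec(S)}\mathfrak p=\bigcup_{\mathfrak p\in\Spec(S)}\mathfrak p$, which forces a unique prime; and for $(3)\Rightarrow(1)$ it observes that $\Nil(S)$, being contained in every maximal ideal and itself maximal, is the \emph{only} maximal ideal, whence $U(S)=S\setminus\Nil(S)$ by the same characterization of units --- nilpotency is never manipulated directly. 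You instead argue elementwise: maximality of $\Nil(S)$ gives $1=ts+n$ with $n\in\Nil(S)$, and the crux you flagged --- extracting invertibility of $s$ without subtraction --- does go through exactly as you anticipate. If $n^k=0$, raise the relation to the $k$-th power; the binomial theorem is subtraction-free and valid in any commutative semiring, so $1=(ts+n)^k=\sum_{i=1}^{k}\binom{k}{i}(ts)^i n^{k-i}$ (the $i=0$ term is $n^k=0$), and every surviving term is divisible by $ts$, giving $s\cdot\bigl(t\sum_{i=1}^{k}\binom{k}{i}(ts)^{i-1}n^{k-i}\bigr)=1$. What your route buys is a self-contained, elementary proof of the only semiring-valid form of ``unit plus nilpotent is a unit'' (namely: $a+n=1$ with $n$ nilpotent forces $a\in U(S)$), making explicit where nilpotency is used; the paper's route is shorter and uniform because it delegates everything to the two cited facts from Golan. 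One small point worth making explicit in your $(1)\Rightarrow(2)$: to conclude that $\Nil(S)$ actually \emph{is} a prime ideal, rather than merely that all primes coincide with it, you need $\Spec(S)\neq\emptyset$, which is guaranteed since $\Max(S)\neq\emptyset$; the paper's proof implicitly relies on the same fact.
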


\begin{proof}
	$(1) \implies (2)$: Let $S$ be a semiring such that each element of $S$ is either a unit or a nilpotent. We know that $\Nil(S) = \bigcap_{\mathfrak{p} \in \Spec(S)} \mathfrak{p}$ and $U(S) = S \setminus \bigcup_{\mathfrak{m} \in \Max(S)} \mathfrak{m}$ \cite[Proposition 6.61]{Golan1999}. Since each prime ideal is included in a maximal ideal \cite[Proposition 6.59]{Golan1999}, we have \[\bigcap_{\mathfrak{p} \in \Spec(S)} \mathfrak{p} = \bigcup_{\mathfrak{p} \in \Spec(S)} \mathfrak{p}.\] Therefore, $S$ has only one prime ideal, and so, $\Nil(S)$ is the only prime ideal of $S$.
	
	$(2) \implies (3)$: Obvious. 
	
	$(3) \implies (1)$: If $\Nil(S)$ is a maximal ideal, then it is the only maximal ideal of $S$. On the other hand, \[U(S) = S \setminus \bigcup_{\mathfrak{m} \in \Max(S)} \mathfrak{m} = S \setminus \Nil(S).\] This implies that each element of $S$ is either a unit or a nilpotent and the proof is complete.
\end{proof}

The concept of localization in semiring theory is defined similar to its counterpart in ring theory \cite{Kim1985}. Similar to ring theory, one can prove that if $\mathfrak{p}$ is a prime ideal of a semiring $S$, then the localization $S_\mathfrak{p}$ of $S$ at $S\setminus \mathfrak{p}$ is a local semiring and its unique maximal ideal is $\mathfrak{p} S_\mathfrak{p}$. Also, it is easy to observe that there is a one-to-one correspondence between the prime ideals of the local semiring $S_\mathfrak{p}$ and those prime ideals of $S$ contained in $\mathfrak{p}$. 	

\begin{theorem}\label{semiringskrulldimensionzeroclassical}
	Let $S$ be a semiring such that each prime ideal of $S$ is maximal, i.e., the Krull dimension of $S$ is zero. Then, $S$ is classical. 
\end{theorem}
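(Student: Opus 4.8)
The plan is to reduce to the local case by localizing at a maximal ideal containing a given non-unit, and then to apply Lemma \ref{metasemifieldpro}. Concretely, let $s \in S$ be an element that is not a unit; the goal is to exhibit a nonzero element of $S$ that $s$ annihilates, which shows $s$ is a zero-divisor and hence that $S = U(S) \cup Z(S)$. Since $U(S) = S \setminus \bigcup_{\mathfrak{m}\in\Max(S)}\mathfrak{m}$, the non-unit $s$ must lie in some maximal ideal $\mathfrak{m}$, and this maximal ideal will be the localization site.

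Next I would pass to the localization $S_\mathfrak{m}$, which by the remarks preceding the theorem is a local semiring whose unique maximal ideal is $\mathfrak{m}S_\mathfrak{m}$. Using the one-to-one correspondence between the prime ideals of $S_\mathfrak{m}$ and the prime ideals of $S$ contained in $\mathfrak{m}$, together with the dimension-zero hypothesis, I would argue that $\mathfrak{m}$ is the only prime of $S$ sitting inside $\mathfrak{m}$: any prime $\mathfrak{p} \subseteq \mathfrak{m}$ is itself maximal by assumption, and since $\mathfrak{m}$ is proper this forces $\mathfrak{p} = \mathfrak{m}$. Consequently $S_\mathfrak{m}$ has a single prime ideal, namely $\mathfrak{m}S_\mathfrak{m}$, and Lemma \ref{metasemifieldpro} (in the form $(2)\implies(1)$, together with $\mathfrak{m}S_\mathfrak{m} = \Nil(S_\mathfrak{m})$) tells me that every element of $S_\mathfrak{m}$ is either a unit or nilpotent.

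The image $s/1$ lies in $\mathfrak{m}S_\mathfrak{m}$, so it cannot be a unit and is therefore nilpotent: $(s/1)^n = s^n/1 = 0$ in $S_\mathfrak{m}$ for some $n \geq 1$. Unwinding the equivalence relation defining the semiring localization, this equality yields a witness $u \in S \setminus \mathfrak{m}$ with $u s^n = 0$ in $S$, and $u \neq 0$ because $0 \in \mathfrak{m}$. To extract an honest annihilator I would then take $k$ to be least with $u s^k = 0$; since $u s^0 = u \neq 0$ we have $k \geq 1$, and the element $x := u s^{k-1}$ is nonzero while $sx = u s^k = 0$. This presents $s$ as a zero-divisor and finishes the proof.

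The bookkeeping of the prime correspondence and the final minimality argument are routine. The conceptual pivot is the uniqueness of the prime in $S_\mathfrak{m}$, which is exactly where the zero-dimensionality is used to turn ``$s$ lies in a maximal ideal'' into ``$s/1$ is nilpotent.'' I expect the only point genuinely needing care to be the last transfer step: verifying that the semiring localization's equivalence relation really does convert the relation $s^n/1 = 0$ back into an equation $u s^n = 0$ over $S$ with $u$ outside $\mathfrak{m}$, since in the semiring setting one cannot rely on the ring-theoretic form of that equivalence and must use the definition directly.
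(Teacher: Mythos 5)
Your proof is correct and takes essentially the same route as the paper's: localize at a maximal ideal containing the given non-unit, use the zero-dimensionality and the prime correspondence to see that $\mathfrak{m}S_\mathfrak{m}$ is the unique prime of $S_\mathfrak{m}$, apply Lemma \ref{metasemifieldpro} to make $s/1$ nilpotent there, and unwind the localization to produce a witness $u \in S \setminus \mathfrak{m}$ with $us^n = 0$, extracting a nonzero annihilator by minimality of the exponent. Your explicit final step (taking $k$ least with $us^k = 0$, noting $u \neq 0$ since $0 \in \mathfrak{m}$, and setting $x = us^{k-1}$) merely spells out what the paper compresses into ``we may choose the positive integer $n$ such that $ts^{n-1} \neq 0$.''
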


\begin{proof}
	Let $S$ be a semiring such that each prime ideal of $S$ is maximal and $\mathfrak{m}$ a maximal ideal of $S$. Since each prime ideal of $S$ is maximal, the only prime ideal of the local semiring $(S_{\mathfrak{m}}, \mathfrak{m} S_{\mathfrak{m}})$ is $\mathfrak{m} S_{\mathfrak{m}}$. By Lemma \ref{metasemifieldpro}, any $s \in \mathfrak{m}$ is nilpotent in $S_{\mathfrak{m}}$, and so, there is a positive integer $n$ such that $s^n = 0$ in $S_{\mathfrak{m}}$. This implies that for each $s \in \mathfrak{m}$ there is a $t \in S \setminus \mathfrak{m}$ such that $t s^n = 0$. We may choose the positive integer $n$ such that $ts^{n-1} \neq 0$. This shows that if an element $s$ of the semiring $S$ is not a unit, then it is a zero-divisor. Hence, $S$ is classical, as required.
\end{proof}

Let $B$ be a nonempty subset of a semiring $S$. The annihilator of $B$, denoted by $\Ann(B)$, is defined to be the set of all elements $s$ in $S$ such that $sb = 0$, for all $b \in B$. If $B = \{b\}$, then $\Ann(B)$ is simply denoted by $\Ann(b)$. An ideal $I$ of $S$ is principal if it is generated by a single element $s$ of $S$. In this case, we denote $I$ by $(s)$.   

Valuation theory for rings was introduced by Krull in \cite{Krull1939}. Some parts of the valuation theory for semirings were introduced and discussed in \cite{Nasehpour2018}. Let us recall that a semidomain is a valuation semiring if its ideals are linearly ordered by inclusion \cite[Theorem 2.4]{Nasehpour2018}. More generally, an arbitrary semiring is uniserial if its ideals are linearly ordered by inclusion \cite[Definition 2.2]{BehzadipourNasehpour2023}. Therefore, any (valuation) uniserial semiring is local, i.e., it has only one maximal ideal. The following is a generalization of Lemma 3 in \cite{Gill1971}:

\begin{theorem}\label{uniserialsemiringclassical}
	A uniserial semiring $S$ is classical if and only if $\Ann \Ann (b) = (b)$, for all $b \in S$.
\end{theorem}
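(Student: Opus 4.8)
The plan is to prove both directions of the equivalence, taking advantage of the fact that a uniserial semiring is local, so its ideals are linearly ordered by inclusion and there is a unique maximal ideal $\mathfrak{m}$ consisting exactly of the non-units.

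For the forward direction, suppose $S$ is classical; I want to show $\Ann\Ann(b) = (b)$ for every $b \in S$. The inclusion $(b) \subseteq \Ann\Ann(b)$ is automatic, since every $x \in \Ann(b)$ satisfies $xb = 0$, whence $b$ annihilates all of $\Ann(b)$ and so $b \in \Ann\Ann(b)$; because $\Ann\Ann(b)$ is an ideal, $(b) \subseteq \Ann\Ann(b)$. The substantive inclusion is the reverse: take $c \in \Ann\Ann(b)$ and show $c \in (b)$. Here I would use the linear ordering of principal ideals: either $(c) \subseteq (b)$, in which case we are done, or $(b) \subsetneq (c)$. In the latter case I would try to produce a nonzero element annihilating $b$ but not $c$, contradicting $c \in \Ann\Ann(b)$. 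This is exactly where classicality enters: if $b$ is a unit, then $\Ann(b) = (0)$, so $\Ann\Ann(b) = S = (b)$ trivially; if $b$ is not a unit, then by classicality $b$ is a zero-divisor, so $\Ann(b) \neq (0)$, and the nontrivial annihilator together with uniseriality should force $c$ into $(b)$.

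For the converse, assume $\Ann\Ann(b) = (b)$ for all $b \in S$, and let $s$ be a non-unit; I must show $s$ is a zero-divisor, i.e. $\Ann(s) \neq (0)$. The idea is to argue by contraposition: if $s$ were regular (not a zero-divisor), then $\Ann(s) = (0)$, hence $\Ann\Ann(s) = \Ann((0)) = S$. But by hypothesis $\Ann\Ann(s) = (s)$, forcing $(s) = S$, i.e. $s$ is a unit. Contrapositively, every non-unit is a zero-divisor, so $S$ is classical. This direction looks cleaner and mostly formal once one checks $\Ann((0)) = S$.

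The main obstacle I anticipate is the reverse inclusion $\Ann\Ann(b) \subseteq (b)$ in the forward direction, specifically handling the case $(b) \subsetneq (c)$ for $c \in \Ann\Ann(b)$. I would need to extract, from the strict containment and uniseriality, an element that annihilates $b$ but not $c$, and I expect to use the existence of a nonzero annihilator of $b$ (guaranteed by classicality when $b$ is a non-unit) together with the total ordering of ideals to compare $\Ann(b)$ and $\Ann(c)$. I would also want to be careful that the semiring setting lacks subtraction, so any argument must avoid cancellation or negation and rely only on the order structure of ideals and the multiplicative relations; this is the step where the proof is most likely to require a genuinely semiring-theoretic (rather than ring-theoretic) justification.
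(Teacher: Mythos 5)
Your backward implication is complete and coincides with the paper's: from $\Ann(s) = \{0\}$ you get $\Ann\Ann(s) = \Ann(\{0\}) = S$, hence $(s) = S$ by hypothesis, so every non-zero-divisor is a unit. The genuine gap is in the forward direction, and it is not merely that the key step is deferred --- the strategy you announce for it cannot work. Suppose $c \in \Ann\Ann(b) \setminus (b)$, so by uniseriality $(b) \subsetneq (c)$. You propose to produce a nonzero element annihilating $b$ but not $c$, with the leverage coming from $\Ann(b) \neq \{0\}$ (classicality applied to $b$). But such an element would be some $x \in \Ann(b) \setminus \Ann(c)$, and none can exist: $b \in (c)$ gives $\Ann(c) \subseteq \Ann(b)$, while $c \in \Ann\Ann(b)$ says precisely that $\Ann(b) \subseteq \Ann(c)$; so $\Ann(b) = \Ann(c)$ follows automatically from your standing assumptions, and the nontriviality of $\Ann(b)$ buys you nothing. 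The contradiction must be extracted elsewhere.

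The paper's device is to apply classicality not to $b$ but to the coefficient relating $b$ and $c$. Since $(b) \subsetneq (c)$ and a uniserial semiring is local with maximal ideal $\mathfrak{m}$, write $b = mc$ with $m \in \mathfrak{m}$ (if the coefficient were a unit, the two principal ideals would coincide). From $(c) \subseteq \Ann\Ann(mc)$ and the fact that $\Ann$ is inclusion-reversing and tripotent, one gets $\Ann(mc) \subseteq \Ann(c)$, i.e., $smc = 0$ implies $sc = 0$ for all $s \in S$. Hence $(c) \cap \Ann(m) = \{0\}$, and since the ideals of $S$ are totally ordered and $(c) \neq \{0\}$, the inclusion $(c) \subseteq \Ann(m)$ is impossible, so $\Ann(m) \subseteq (c)$ and therefore $\Ann(m) = (c) \cap \Ann(m) = \{0\}$. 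Thus $m$ is not a zero-divisor, classicality makes $m$ a unit, and this contradicts $m \in \mathfrak{m}$. Note that this argument uses only the order structure of ideals and multiplicative identities, so your (well-founded) concern about the absence of subtraction is resolved by this route: no cancellation or negation is ever invoked.
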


\begin{proof}
	Let $S$ be a classical semiring. It is clear that $(b) \subseteq \Ann \Ann (b)$, for all $b \in S$. Now, assume that there is a $c \in S$ and a nonzero $y \in S$ such that $y \in \Ann \Ann (c) \setminus (c)$. Since $S$ is a uniserial semiring, $(c) \subset (y)$. This implies that $c = my$, where $m$ is an element of the only maximal ideal $\mathfrak{m}$ of the semiring $S$. Observe that \[(y) \subseteq \Ann \Ann (c) = \Ann \Ann (my).\] Since annihilator operator is an inclusion reversing and a tripotent operator, by taking annihilator from both sides of the above inclusion, we have \[\Ann (y) \supseteq \Ann (my).\] This means that $smy = 0$ implies $sy = 0$, for all $s \in S$. Therefore, \[(y) \cap \Ann(m) = \{0\}.\] On the other hand, $(y)$ is a nonzero ideal. Now, since ideals of $S$ are totally ordered by inclusion, $\Ann(m) = \{0\}$. This means that $m$ is not a zero-divisor. Since $S$ is classical, $m$ needs to be a unit which is a contradiction because $m$ is an element of the maximal ideal $\mathfrak{m}$. Thus $\Ann \Ann (b) = (b)$, for all $b \in S$.
	
	Conversely, let $\Ann \Ann (b) = (b)$, for all $b \in S$ and $m$ be an element of $S \setminus Z(S)$. This implies that $\Ann (m) = \{0\}$. Therefore, $\Ann \Ann (m) = S$. So, $(m) = S$ which means that $m$ is a unit and the proof is complete.  
\end{proof}

Let $S$ be a semiring and $M$ an $S$-semimodule. On the set $S \times M$, define addition and multiplication as follows:

\begin{itemize}
	\item $(s_1, m_1) + (s_2,m_2) = (s_1 + s_2 , m_1 + m_2)$,
	
	\item $(s_1, m_1) \cdot (s_2, m_2) = (s_1 s_2, s_1 m_2 + s_2 m_1)$.
\end{itemize}

The set $S \times M$ equipped with above operations is a semiring denoted by $S \widetilde{\oplus} M$ and called the expectation semiring of the $S$-semimodule $M$ \cite[Example 7.3]{Golan2003}. The additively invertible elements of an $S$-semimodule $M$ is collected in the set $V(M)$.

\begin{proposition}\label{expectationsemiringclassical}
Let $S$ be a semiring and $M$ an $S$-semimodule such that $(M,+)$ is a group, i.e., $V(M) = M$. If $S$ is classical then the expectation semiring $S \widetilde{\oplus} M$ is also classical.
\end{proposition}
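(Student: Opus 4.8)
The plan is to take an arbitrary element $(s,m)$ of $S \widetilde{\oplus} M$ --- whose zero and identity are $(0,0_M)$ and $(1,0_M)$ respectively --- and to decide its status according to whether its first coordinate $s$ is a unit of $S$. Since $S$ is classical, $s$ is either a unit or a zero-divisor, so this dichotomy is exhaustive, and in each case I aim to transfer the relevant property from $s$ up to $(s,m)$.

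First, suppose $s \in U(S)$. I would look for the inverse of $(s,m)$ in the form $(s^{-1}, w)$. Multiplying out gives $(s,m)(s^{-1},w) = (1,\, sw + s^{-1}m)$, so it suffices to solve $sw = -(s^{-1}m)$ for $w$; applying $s^{-1}$ yields the explicit solution $w = -(s^{-2}m)$. This is precisely the step where the hypothesis $V(M)=M$ is indispensable: forming $-(s^{-1}m)$ and $-(s^{-2}m)$, together with the identity $s\cdot(-y) = -(s\cdot y)$, requires that $(M,+)$ be a group. A short verification then confirms that $(s^{-1}, -(s^{-2}m))$ is an inverse, so $(s,m)$ is a unit.

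Next, suppose $s$ is a zero-divisor, so there is a nonzero $z \in S$ with $sz = 0$. The naive candidate annihilator $(z,0_M)$ fails in general, because $(s,m)(z,0_M) = (0, zm)$ and $zm$ need not vanish; this is the main obstacle. The resolution rests on the observation that $zm$ is itself killed by $s$, namely $s(zm) = (sz)m = 0_M$. I would then split into two sub-cases. If $zm \neq 0_M$, the element $(0,zm)$ is nonzero and $(s,m)(0,zm) = (0, s(zm)) = (0,0_M)$. If $zm = 0_M$, the element $(z,0_M)$ is nonzero and $(s,m)(z,0_M) = (0, zm) = (0,0_M)$. Either way $(s,m)$ annihilates a nonzero element, hence is a zero-divisor.

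Combining the two cases shows that every element of $S \widetilde{\oplus} M$ is a unit or a zero-divisor, so $S \widetilde{\oplus} M$ is classical. The only genuinely delicate point is the zero-divisor case: recognizing that the second coordinate of the product forces a case distinction on whether $zm$ vanishes, and that the relation $s(zm)=0_M$ makes both sub-cases go through. By contrast, the unit case is a routine inverse computation, contingent only on the group structure of $M$.
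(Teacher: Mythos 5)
Your proof is correct, and it takes a more self-contained route than the paper. The paper settles both cases at once by citing Theorem 2.2 of \cite{Nasehpour2020}, which under the hypothesis $V(M)=M$ characterizes the units and zero-divisors of $S \widetilde{\oplus} M$ coordinate-wise: if $(s,m)$ is not a unit then $s$ is not a unit, hence $s \in Z(S)$ by classicality, hence $(s,m) \in Z(S \widetilde{\oplus} M)$ by the same theorem. You follow the identical dichotomy on the first coordinate but reconstruct the content of that cited theorem from scratch: the explicit inverse $\left(s^{-1}, -(s^{-2}m)\right)$ in the unit case, and in the zero-divisor case the annihilator $(z,0_M)$ or $(0,zm)$ according to whether $zm$ vanishes, the key identity being $s(zm)=(sz)m=0_M$. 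Your case split on $zm$ is exactly the right repair for the failure of the naive annihilator $(z,0_M)$, and all computations check out (in particular $s\cdot(-y)=-(s\cdot y)$ does follow from $s\cdot(y+(-y))=s\cdot 0_M=0_M$). What your elementary route buys is precision about where the hypothesis $V(M)=M$ is actually used: only the unit case needs the group structure of $M$ (to solve $sw+s^{-1}m=0_M$), whereas your zero-divisor argument shows $Z(S)\times M \subseteq Z(S \widetilde{\oplus} M)$ for an arbitrary $S$-semimodule $M$ --- a refinement that the paper's citation-based proof leaves invisible; what the paper's route buys is brevity and reuse of an established structural result, which it also exploits again in Proposition \ref{expectationsemiringcompletelyprimary}.
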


\begin{proof}
Let $M$ be an $S$-semimodule such that $V(M) = M$. Assume that $(s,m)$ is not a unit element of the expectation semiring $S \widetilde{\oplus} M$. Then, by Theorem 2.2 in \cite{Nasehpour2020}, $s$ is not a unit. Since $S$ is classical, $s$ has to be a zero-divisor of $S$. Again, by Theorem 2.2 in \cite{Nasehpour2020}, $(s,m)$ is a zero-divisor of $S \widetilde{\oplus} M$. Hence, $S \widetilde{\oplus} M$ is classical. This completes the proof.
\end{proof}

Let $R$ be a ring. The total quotient ring $Q(R)$ of $R$ is a classical ring and a ring $R$ is classical if and only if $R = Q(R)$ \cite[Proposition 11.4]{Lam1999}. This is perhaps why classical rings are also known as ``rings of quotients'' \cite[p. 320]{Lam1999}) or ``full quotient rings'' \cite[\S3]{Ching1977} in the literature.

Now, let $S$ be a semiring and $MC(S)$ the set of multiplicatively cancellative elements of $S$. Then, the localization of $S$ at $MC(S)$ \cite[\S11]{Golan1999}, denoted by $Q(S)$ and called the total quotient semiring, is the counterpart of total quotient ring. The question arises if the semiring $Q(S)$ is classical for an arbitrary semiring $S$. Before answering to this question, we give the following result:

\begin{lemma}\label{totalquotiententire} 
	Let $S$ be an entire semiring. Then, $Q(S)$ is entire.
\end{lemma}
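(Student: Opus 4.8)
The plan is to argue directly from the construction of $Q(S) = MC(S)^{-1}S$, exploiting the fact that every denominator — and, crucially, every auxiliary element appearing in the equivalence relation that defines the localization — lies in $MC(S)$ and is therefore multiplicatively cancellative. First I would recall that elements of $Q(S)$ are equivalence classes of pairs $(a,s)$ with $a \in S$ and $s \in MC(S)$, written $a/s$, where $(a_1,s_1) \sim (a_2,s_2)$ means $u s_2 a_1 = u s_1 a_2$ for some $u \in MC(S)$, and where multiplication is $(a/s)(b/t) = ab/(st)$, with zero $0/1$ and identity $1/1$.

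The key preliminary step is to characterize the zero fractions: I claim $a/s = 0/1$ if and only if $a = 0$ in $S$. Indeed, $a/s = 0/1$ means $u\cdot 1 \cdot a = u \cdot s \cdot 0 = 0$ for some $u \in MC(S)$; since $u$ is cancellative and $ua = u\cdot 0$, we cancel $u$ to get $a = 0$, while the converse is immediate by taking $u = 1$. With this in hand, the main argument is short. Suppose $(a/s)(b/t) = ab/(st)$ equals $0/1$ in $Q(S)$. By the characterization (applied to the numerator $ab$ over the denominator $st$) this forces $ab = 0$ in $S$. Since $S$ is entire, $ab = 0$ implies $a = 0$ or $b = 0$, and applying the characterization in the reverse direction yields $a/s = 0$ or $b/t = 0$. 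Hence $Q(S)$ has no nonzero zero-divisors, i.e.\ $Q(S)$ is entire.

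I do not expect any substantial obstacle beyond bookkeeping, but the one genuinely delicate point — and the step I would be most careful about — is the reduction of the localization's equivalence relation. In a general semiring localization the auxiliary element $u$ cannot be deleted, and so the equivalence $a/s = 0$ need not be equivalent to $a = 0$; it is precisely because $MC(S)$ consists of cancellative elements that $u$ may be cancelled throughout. Establishing this cleanly (equivalently, noting that $\sim$ collapses to $s_2 a_1 = s_1 a_2$ on $MC(S)^{-1}S$) is what allows entireness of $S$ to transfer verbatim to $Q(S)$.
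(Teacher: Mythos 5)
Your proof is correct and takes essentially the same approach as the paper: both reduce $(a/s)(b/t)=0$ in $Q(S)$ to $ab=0$ in $S$ by cancelling the multiplicatively cancellative elements (the denominators and the auxiliary element in the localization's equivalence relation), and then invoke entireness of $S$. Your explicit characterization that $a/s = 0/1$ if and only if $a=0$ is precisely the step the paper leaves implicit in its remark that the denominators are multiplicatively cancellative, so you have simply made the same argument more careful, not different.
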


\begin{proof}
	Let $a/b$ and $c/d$ be elements in $Q(S)$ with $(a/b)(c/d) = 0/0$. This implies that $ac = 0$, since $c$ and $d$ are multiplicatively cancellative. Since $S$ is entire, we have either $a = 0$ or $c = 0$. This means that either $a/b = 0/1$ or $c/d = 0/1$. Thus $Q(S)$ is entire and the proof is complete.
\end{proof}

\begin{theorem}\label{totalquotientsemiringnotclassical}
	Let $(R,\oplus,\odot)$ be a commutative ring with $1\neq 0$ such that there is a non-trivial zero-divisor $n \in R$. Assume that $z \notin R$, and set $S = R \cup \{z\}$. Define addition $+$ and multiplication $\cdot$ on $S$ as follows:
	
	\begin{enumerate}
		\item If $s,t \in R$, then $s + t = s \oplus t$, and if $s \in S$, then $s + z = z + s = s$.
		
		\item If $s,t \in R$, then $s \cdot t = s \odot t$, and if $s \in S$, then $s \cdot z = z \cdot s = z$.
	\end{enumerate}
	
	Then, $Q(S)$ is not a classical semiring.  
\end{theorem}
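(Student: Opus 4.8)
The plan is to compute the total quotient semiring $Q(S)$ explicitly and then exhibit a single element that is neither a unit nor a zero-divisor. The guiding observation is that the construction adjoins a \emph{new} zero $z$ to $R$, so the original $0_R$ becomes an ordinary (nonzero) element of $S$. Consequently the relation $nm = 0_R$ that witnesses $n$ being a zero-divisor \emph{inside} $R$ says nothing about $n$ being a zero-divisor in $S$, whose zero is $z$: a product of two elements of $R$ stays in $R$ and can never equal $z$, and only multiplying by $z$ produces $z$. The whole example turns on the semiring distinction between ``zero-divisor'' (some $x\neq 0$ with $sx=0$) and ``non-cancellative'', which is invisible in ring theory.

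First I would record the structure of $S$. One checks routinely that $(S,+,\cdot)$ is a commutative semiring whose zero is $z$ and whose identity is $1_R$, with $z$ absorbing, and that for $s,x\in R$ we always have $s\cdot x = s\odot x\in R$. Next I would determine $MC(S)$. For $s\in R$, an equality $s\cdot x_1 = s\cdot x_2$ with $x_i\in S$ forces both $x_i\in R$, since $s\cdot x_i\in R$ whenever $x_i\in R$ while $s\cdot z=z\notin R$; hence $s$ is cancellative in $S$ exactly when it is cancellative in $R$, i.e. when it is a non-zero-divisor of $R$. As $z$ itself is not cancellative, we obtain $MC(S)=MC(R)$, the set of regular elements of $R$.

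Then I would describe $Q(S)=MC(S)^{-1}S$. Because every denominator is regular, two fractions $r_1/t_1,\,r_2/t_2$ with $r_i\in R$ agree in $Q(S)$ iff $r_1\odot t_2 = r_2\odot t_1$, exactly as in the total quotient ring $Q(R)$, while all fractions $z/t$ collapse to a single element $z/1$ that is the absorbing zero of $Q(S)$. Thus, as a multiplicative monoid, $Q(S)$ is the disjoint union of $Q(R)$ and the new zero $z/1$, and in particular $0_R/1$ is a \emph{nonzero} element of $Q(S)$. Now test $n/1\in Q(S)$. It is not a unit: since $z/1$ is absorbing, any factor yielding $1_R/1$ must lie in $Q(R)$, and there $n/1$ is the image of a zero-divisor of $R$, hence a non-unit of the total quotient ring $Q(R)$. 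It is not a zero-divisor either: for $q=s/t$ with $s\in R$ we have $(n/1)(s/t) = (n\odot s)/t$, which is never $z/1$ because $n\odot s\in R$ whereas $z\notin R$; so the only $q$ with $(n/1)q=z/1$ is $q=z/1$, which is not nonzero. Therefore $n/1$ is neither a unit nor a zero-divisor, and $Q(S)$ is not classical. (The same argument applies verbatim to $0_R/1$, so the hypothesis that $n$ be a \emph{nonzero} zero-divisor is convenient but not essential.)

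The main obstacle is conceptual rather than computational: one must use the precise semiring notion of a zero-divisor and keep the two zeros $z$ and $0_R$ rigorously apart. The point is that $n$ \emph{is} non-cancellative in $S$, because $n\cdot m = 0_R = n\cdot 0_R$ with $m\neq 0_R$, yet it fails to be a zero-divisor precisely because its annihilator relations land on $0_R\neq 0_S$; this is exactly the gap between conditions $(2)$ and $(3)$ of the introduction, and it is what lets $Q(S)$ avoid being classical even though the analogous $Q(R)$ always is.
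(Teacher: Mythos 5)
Your proof is correct, and it reaches the paper's conclusion by a somewhat different route. The paper keeps $Q(S)$ abstract: it notes that $S$ is entire, invokes Lemma \ref{totalquotiententire} to conclude that $Q(S)$ is entire (so $Z(Q(S)) = \{z/1\}$), and then rules out $n/1$ being a unit by a direct computation: $n/1 \cdot x/y = 1/1$ forces $nx = y$ with $y$ regular in $R$, contradicting that $n$ is a zero-divisor of $R$. You instead compute $Q(S)$ explicitly, identifying it as the adjoined-zero semiring $Q(R) \cup \{z/1\}$ --- that is, the very construction of the theorem applied to the ring $Q(R)$ --- after pinning down $MC(S) = R \setminus Z(R)$ exactly as the paper does; entireness of $Q(S)$ then falls out for free (products of $R$-fractions stay in the $Q(R)$ part and never reach $z/1$), and the non-unit claim is delegated to ring theory inside $Q(R)$, where the image of a zero-divisor of $R$ is a zero-divisor, hence a non-unit, of the total quotient ring. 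Your route is slightly longer but buys two things: a transparent structural description of $Q(S)$, and the observation --- absent from the paper --- that the hypothesis of a non-trivial zero-divisor is superfluous, since $0_R/1$ is already neither a unit nor a zero-divisor of $Q(S)$, so $Q(S)$ is never classical for any commutative ring $R$ with $1 \neq 0$. The paper's argument, in turn, is shorter because Lemma \ref{totalquotiententire} was established anyway and because the unit computation avoids setting up the isomorphism with $Q(R)$; both arguments hinge on the same key point you emphasize, namely that the witnessing relations $nm = 0_R$ land on $0_R$, which is a nonzero element of $S$ and of $Q(S)$.
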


\begin{proof}
	Observe that $(S,+,\cdot, z,1)$ is an entire semiring (see Example 1.6 in \cite{Golan1999} and Example 1.1 in \cite{LaGrassa1995}). It is clear that $r \in R$ is multiplicatively cancellative if and only if $r$ is regular. This implies that $MC(S) = R \setminus Z(R)$. Since $S$ is entire, by Lemma \ref{totalquotiententire}, $Q(S)$ is entire. Therefore, $Z(Q(S)) = \{z/1\}$. Let $n$ be a non-trivial zero-divisor of the ring $R$ and consider the element $n/1$ in $Q(S)$. Our claim is that $n/1$ is neither a unit nor a zero-divisor. It is clear that $n/1$ is nonzero in $Q(S)$, and so, is not a zero-divisor in $Q(S)$ because $Q(S)$ is entire. Also, note that $n/1$ is not a unit in $Q(S)$ because if there is an element $x/y \in Q(S)$ such that \[n/1 \cdot x/y = 1/1,\] then $nx = y$. Since $y$ is not a zero-divisor in $R$, $n$ is not a zero-divisor in $R$, a contradiction. Hence, $Q(S)$ is not classical, as required.
\end{proof}

\begin{proposition}\label{totalquotientsemiringclassical}
Let $S$ be a semiring. Then, the following statements hold:
	
\begin{enumerate}
	\item The semiring $S$ can be considered as a subsemiring of $Q(S)$.
	\item If $S$ is classical, then $S =Q(S)$.
\end{enumerate}
	
\end{proposition}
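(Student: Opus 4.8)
The plan is to handle the two statements separately, since they are essentially independent. For part (1), I would construct the canonical map $\iota: S \to Q(S)$ sending $s \mapsto s/1$ and verify it is an injective semiring homomorphism. Injectivity is the point worth checking: if $s/1 = t/1$ in $Q(S) = S[MC(S)^{-1}]$, then by the definition of equality in a localization there is some $c \in MC(S)$ with $csc' = ctc'$ (more precisely, the equivalence relation gives $c(s\cdot 1 - t \cdot 1) = 0$ in the semiring sense, though one must be careful since we cannot subtract). The cleanest route is to recall that the equivalence $s_1/c_1 \sim s_2/c_2$ on $S \times MC(S)$ is defined by the existence of $d \in MC(S)$ with $d(s_1 c_2) = d(s_2 c_1)$; taking $c_1 = c_2 = 1$ yields $d s_1 = d s_2$, and since $d$ is multiplicatively cancellative we get $s_1 = s_2$. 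This gives injectivity directly, and the homomorphism properties are routine from the localization construction; hence $S$ embeds as a subsemiring of $Q(S)$.

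For part (2), suppose $S$ is classical. The goal is to show the localization map is \emph{surjective}, i.e. every element $s/c \in Q(S)$ with $c \in MC(S)$ already lies in the image of $S$. The key observation is this: by definition $MC(S)$ consists of the multiplicatively cancellative elements, and a multiplicatively cancellative element is precisely one that is \emph{not} a zero-divisor. Since $S$ is classical, every element is either a unit or a zero-divisor, so every $c \in MC(S)$ must be a unit of $S$. But if $c$ is already a unit in $S$, then $c^{-1} \in S$ exists and $s/c = (sc^{-1})/1$, which is the image of $sc^{-1} \in S$. Therefore $\iota$ is surjective, and combined with part (1) it is an isomorphism, so $S = Q(S)$.

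The main step to get right is the identification $MC(S) \subseteq U(S)$ under the classical hypothesis, together with the compatibility check $s/c = (sc^{-1})/1$ inside $Q(S)$; the latter amounts to verifying $(sc^{-1}) \cdot c = s \cdot 1$ in $S$ and invoking the definition of equality of fractions. I expect no genuine obstacle here, only the need for care in handling the localization's equivalence relation without subtraction (since $S$ need not be additively cancellative). Everything else follows from the elementary fact that an element of a semiring is a non-zero-divisor if and only if it is multiplicatively cancellative, which is built into the definitions given in the introduction.
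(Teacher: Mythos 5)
Your proof is correct and takes essentially the same route as the paper's: embed $S$ into $Q(S)$ via $s \mapsto s/1$ (you spell out the injectivity through the localization's equivalence relation, which the paper leaves as ``easy to verify''), and for (2) observe that every $c \in MC(S)$ is a non-zero-divisor, hence a unit since $S$ is classical, so $s/c = (sc^{-1})/1$ and the embedding is surjective. One caveat worth fixing: your closing assertion that an element of a semiring is a non-zero-divisor \emph{if and only if} it is multiplicatively cancellative is false in general --- only the implication cancellative $\implies$ non-zero-divisor holds, and the paper's own Proposition \ref{latticesnotclassical} exhibits non-zero-divisors that are non-cancellative --- but since your argument uses only the correct direction, the proof itself is unaffected.
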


\begin{proof}
	(1): It is easy to verify that the function $S \rightarrow Q(S)$ defined by $s \mapsto s/1$ is injective. Therefore, $S$ can be considered as a subsemiring of $Q(S)$.
	
	(2): Now, let $S$ be classical. Consider an arbitrary element $x/s \in Q(S)$, where $x \in S$ and $s \in MC(S)$. Since $s \in MC(S)$, $s$ cannot be a zero-divisor. Since $S$ is classical, $s$ is a unit. Now, since $x/s = s^{-1} x$ and $ s^{-1} x \in S$, we see that $Q(S) \subseteq S$. Now, by (1), we see that $S = Q(S)$ and the proof is complete. 
\end{proof}

Let $R$ be a ring. It is easy to see that if $R = Q(R)$, then $R$ is a classical ring. Now, let $S$ be a semiring such that $S = Q(S)$. The question arises if $S$ is classical. The following examples show that the answer to this question is negative.

\begin{example}\label{totalquotientsemiringnotclassical2}
	Let $S$ be a proper non-classical semiring such that it is multiplicatively idempotent, i.e., for each $u \in S$, we have $u^2 = u$ (consider Hu's semiring or LaGrassa's semiring given in Examples \ref{nonclassicalpropersemirings}). It is clear that $MC(S) = \{1\}$, and so, $S = Q(S)$ while $S$ is not classical.
\end{example}

A semiring $S$ is called to be a principal ideal semiring (for short, PIS) if each ideal of $S$ is principal \cite{Nasehpour2019}.

It is a famous fact in semiring theory that if $S$ is a commutative semiring, then the set of all ideals $\Id(S)$ of the semiring $S$ is a commutative semiring where addition and multiplication of ideals of $S$ are defined as following: \[I+J = \{a+b: a\in I \text{~and~} b\in J\}\] \[IJ = \left\{\sum_{i=1}^{n} a_i b_i: a_i \in I,~b_i \in J,~ n \in \mathbb{N}\right\}.\] Note that the only unit element of $\Id(S)$ is $S$. First we prove the following:

\begin{proposition}\label{semiringidealsclassical}
Let $S$ be a semiring. Then, the following statements hold:
\begin{enumerate}
	\item If $\Id(S)$ is classical, then so is the semiring $S$.
	\item If $S$ is classical and a PIS, then $\Id(S)$ is also classical. 
\end{enumerate}

\end{proposition}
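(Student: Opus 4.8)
The plan is to exploit the dictionary between an element $s \in S$ and the principal ideal $(s) = Ss$ that it generates, together with the two facts recorded just before the statement: that $I + J$ and $IJ$ make $\Id(S)$ into a commutative semiring, and that the only unit of $\Id(S)$ is $S$ itself. The first preliminary observation I would record is that $(s) = Ss = \{rs : r \in S\}$; this follows immediately from distributivity ($r_1 s + r_2 s = (r_1 + r_2)s$) and from $s = 1 \cdot s$, and it makes the translation between the two semirings transparent: $(s) = S$ if and only if $1 \in Ss$, i.e. if and only if $s \in U(S)$.

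For part (1), I would take an arbitrary $s \in S$ and examine $(s)$ as an element of $\Id(S)$. Since $\Id(S)$ is assumed classical, $(s)$ is either a unit or a zero-divisor of $\Id(S)$. In the first case the only unit of $\Id(S)$ being $S$ forces $(s) = S$, whence $rs = 1$ for some $r \in S$ and $s \in U(S)$. In the second case there is a nonzero ideal $J$ with $(s)J = (0)$; choosing any nonzero $b \in J$ and noting that $sb \in (s)J = \{0\}$ gives $sb = 0$ with $b \neq 0$, so $s \in Z(S)$. Either way $S = U(S) \cup Z(S)$, so $S$ is classical.

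For part (2), I would invoke the PIS hypothesis to write every ideal as $(a)$ for some $a \in S$, and then split on whether $a$ is a unit. If $(a) = S$, then $(a)$ is the unit element of $\Id(S)$. Otherwise $a \notin U(S)$ (else $(a) = S$), so the classicality of $S$ makes $a$ a zero-divisor, giving $ab = 0$ for some nonzero $b$. The decisive computation is the identity $(a)(b) = (ab)$ for principal ideals, which one reads off from the definition of $IJ$: every generator $(ra)(tb) = (rt)ab$ lies in $(ab)$, and conversely $ab = (1\cdot a)(1\cdot b) \in (a)(b)$. Then $(a)(b) = (ab) = (0)$ while $(b) \neq (0)$ exhibits $(a)$ as a zero-divisor of $\Id(S)$, so $\Id(S)$ is classical.

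The only place demanding care is the bookkeeping between the two semirings, and in particular the product identity $(a)(b) = (ab)$, where one must verify both inclusions from the defining formula for $IJ$ rather than assume it. Everything else reduces to the clean equivalences $(s) = S \Leftrightarrow s \in U(S)$ and ``$(s)$ is a zero-divisor of $\Id(S)$'' $\Leftrightarrow s \in Z(S)$, which the observation $(s) = Ss$ makes routine. I therefore do not anticipate a genuine obstacle; the main risk is merely being careless about the direction of the inclusion $(a)(b) \subseteq (ab)$ versus $(ab) \subseteq (a)(b)$.
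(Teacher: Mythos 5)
Your proposal is correct and follows essentially the same route as the paper's own proof: in both parts you reduce everything to the dictionary $s \leftrightarrow (s)$, using that $S$ is the only unit of $\Id(S)$ and that a zero-divisor partner $J$ of $(s)$ yields a nonzero $b \in J$ with $sb=0$ (and conversely $st=0$ gives $(s)(t)=(0)$ under the PIS hypothesis). The only difference is that you explicitly verify $(s)=Ss$, the equivalence $(s)=S \Leftrightarrow s \in U(S)$, and both inclusions of $(a)(b)=(ab)$, details the paper's proof takes for granted.
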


\begin{proof}
(1): Let $\Id(S)$ be classical. If $s \in S$ is not a unit in $S$, then the principal ideal $(s)$ is a proper ideal of $S$, and so, it cannot be a unit in $\Id(S)$ because the only unit element of the semiring $\Id(S)$ is $S$. Now, since $\Id(S)$ is classical, the ideal $(s)$ is a zero-divisor on $\Id(S)$ which means that there is a nonzero ideal $J$ of $S$ such that $(s)J = (0)$. Let $b$ be a nonzero element of $J$. Therefore, $sb = 0$, and so, $s$ is a zero-divisor on $S$. Thus $S$ is classical.

(2): Now, let $S$ be classical. Our purpose is to prove that $\Id(S)$ is classical. Since by assumption, $S$ is a PIS, each element of $\Id(S)$ is of the form $(s)$. If $(s)$ is not a unit, then $(s) \neq S$ and this is equivalent to say that $s$ is not a unit in $S$. So, there is a nonzero element $t \in S$ such that $st = 0$. This implies that $(s)(t) = (0)$. Therefore, $(s)$ is a zero-divisor on $\Id(S)$. Thus $\Id(S)$ is classical and the proof is complete. 
\end{proof}

\begin{corollary}
If $R$ is a classical and principal ideal ring, then $\Id(R)$ is a classical semiring. Moreover, if $R$ is a finite principal ideal ring, then $\Id(R)$ is a finite classical semiring.
\end{corollary}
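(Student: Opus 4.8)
The plan is to deduce this corollary directly from Proposition \ref{semiringidealsclassical}(2), after reconciling the ring-theoretic and semiring-theoretic vocabulary. First I would observe that, since $R$ is a ring, its ideals in the semiring sense coincide with its ideals in the ring sense: the semiring requirement $sa \in I$ for all $s \in R$ automatically yields closure under negation because $-1 \in R$, so the two notions of ideal agree. Consequently, a principal ideal ring is precisely a PIS when viewed as a semiring, and $\Id(R)$ computed in the semiring sense is the same lattice of ideals one obtains ring-theoretically. Likewise, an element of $R$ is a zero-divisor in the semiring sense (i.e.\ not multiplicatively regular) exactly when it is a zero-divisor in the ring sense, so ``classical ring'' and ``classical semiring'' mean the same thing for $R$.

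With these identifications, the first statement is immediate: if $R$ is a classical ring and a principal ideal ring, then $R$ is a classical semiring and a PIS, so Proposition \ref{semiringidealsclassical}(2) applies verbatim and yields that $\Id(R)$ is classical. For the ``moreover'' part, I would note that a finite principal ideal ring is automatically classical: by Corollary \ref{finiteringclassical}, every finite ring is classical, so the finiteness hypothesis already supplies the ``classical'' hypothesis, and the first part then gives that $\Id(R)$ is classical. Finiteness of $\Id(R)$ is clear, since a finite set $R$ has only finitely many subsets and hence only finitely many ideals.

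I expect no substantial obstacle here; the only point requiring genuine care is the bookkeeping that links the ring and semiring definitions of ``ideal'' and of ``classical''. Once that is in place, the statement is a direct specialization of the already-established Proposition \ref{semiringidealsclassical}(2), with Corollary \ref{finiteringclassical} supplying the missing ``classical'' hypothesis in the finite case.
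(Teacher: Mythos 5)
Your proof is correct and follows exactly the route the paper intends: the corollary is stated without proof as an immediate consequence of Proposition \ref{semiringidealsclassical}(2), with Corollary \ref{finiteringclassical} supplying classicality in the finite case. Your extra care in checking that semiring ideals of a ring coincide with ring ideals (via $-1 \in R$) and that the two notions of zero-divisor agree is sound bookkeeping that the paper leaves implicit.
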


\begin{example}
Let $n > 1$ be an integer. The ring $\mathbb{Z}_n$ is a finite principal ideal ring. So, $\Id(\mathbb{Z}_n)$ is an example of a finite classical semiring.
\end{example}

Since each nilpotent element of a semiring is a zero-divisor, semirings in which their elements are either units or nilpotents are examples of classical semirings. We pass to the next section to discuss this subfamily of classical semirings.

\section{Completely primary semirings}\label{sec:completelyprimarysemirings}

Let us recall that in commutative ring theory, a ring $R$ is called to be a completely primary ring if $\Nil(R)$ is a maximal ideal \cite[Definition 1.2]{Snapper1950}. A ring $R$ is completely primary if and only if each element of $R$ is either a unit or a nilpotent (see Exercise 7.4.40 in \cite{DummitFoote2004}). Completely primary rings have been discussed in some papers related to commutative ring theory (cf. \cite{ClarkGosaviPollack2017} and \cite{Rahimi2003}). Inspired by this and in view of Lemma \ref{metasemifieldpro}, we give the following definition:

\begin{definition}\label{completelyprimarysemiringdef}
We say a semiring $S$ is completely primary if each element of $S$ is either a unit or a nilpotent.
\end{definition}

\begin{remark}
Rings whose elements are either units or nilpotents have application in automata theory \cite{ArvindMukhopadhyaySrinivasan2010}. They have been investigated in non-commutative algebra \cite{Kashan2021} also. Note that if $R$ is a ring and an $R$-module $M$ is Artinian, Noetherian, and indecomposable, then each element the ring $\End_R(M)$ (which is usually non-commutative) is either a unit or a nilpotent (check Lemma 3.25 on p. 77 in \cite{Magurn2002}). In non-commutative algebra, such rings are sometimes called metadivision (check Definition 1 in \cite{LuHe2014}).
\end{remark}

\begin{proposition}\label{completelyprimaryrings}
Let $K$ be a field, $\{x_i\}_{i\in \Lambda}$ indeterminates over $K$, and $m_i$ be a positive integer for each $i \in \Lambda$. Consider the ideal $I = (x^{m_i}_i : i \in \Lambda)$ of the polynomial ring $R = K[x_i : i \in \Lambda]$. Then, $R/I$ is a completely primary ring.
\end{proposition}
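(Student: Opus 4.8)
The plan is to exhibit a single maximal ideal $\mathfrak{m}$ of $\bar R := R/I$ that coincides with $\Nil(\bar R)$, thereby verifying Definition \ref{completelyprimarysemiringdef} directly: every element will turn out to be either a unit (if it lies outside $\mathfrak{m}$) or a nilpotent (if it lies inside). The natural candidate is $\mathfrak{m} = (x_i : i \in \Lambda)/I$, the image of the irrelevant ideal. Since $R/(x_i : i \in \Lambda) \cong K$ is a field, $\mathfrak{m}$ is a maximal ideal of $\bar R$, so it remains only to show that $\mathfrak{m}$ consists exactly of the nilpotent elements and that its complement consists of units.

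First I would show that every element of $\mathfrak{m}$ is nilpotent. The crucial observation --- and the one place where the possibly infinite index set $\Lambda$ must be handled with care --- is that any element $f \in \bar R$ is represented by a genuine polynomial, hence involves only finitely many of the indeterminates, say $x_{i_1}, \dots, x_{i_k}$. If moreover $f \in \mathfrak{m}$, then this representative may be taken with zero constant term, so $f$ is a $K$-linear combination of monomials of positive total degree in $x_{i_1}, \dots, x_{i_k}$. Setting $N = \sum_{j=1}^k (m_{i_j} - 1) + 1$, every monomial appearing in $f^N$ has total degree at least $N$ in these $k$ variables, so some exponent must exceed $m_{i_j}-1$; as $x_{i_j}^{m_{i_j}} \equiv 0$ modulo $I$, we get $f^N = 0$. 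Thus $f$ is nilpotent. (Note that $\mathfrak{m}$ itself need not be a nilpotent ideal when $\Lambda$ is infinite, since no single exponent $N$ works simultaneously; it is essential that the nilpotency degree be bounded element by element.)

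Next I would show that every element outside $\mathfrak{m}$ is a unit. Such an element can be written as $c + h$ with $c \in K^\times$ and $h \in \mathfrak{m}$. By the previous step $h$ is nilpotent, say $h^N = 0$, so the finite sum $c^{-1}\sum_{j=0}^{N-1} (-c^{-1} h)^j$ is an inverse of $c + h$. Hence $c + h$ is a unit.

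Combining the two steps gives $\bar R = U(\bar R) \cup \Nil(\bar R)$ with $\Nil(\bar R) = \mathfrak{m}$ maximal, which is exactly the assertion that $R/I$ is completely primary; one could equivalently phrase the conclusion through Lemma \ref{metasemifieldpro}. I expect the only genuine subtlety to be the infinite-$\Lambda$ point flagged above: one must resist the temptation to argue that $\mathfrak{m}$ is nilpotent as an ideal, and instead exploit that each fixed element involves only finitely many variables.
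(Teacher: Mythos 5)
Your proof is correct, but it takes a genuinely different route from the paper's. The paper argues entirely at the level of prime ideals: any prime $\mathfrak{p} \supseteq I$ contains each $x_i^{m_i}$, hence each $x_i$ by primality, so $\mathfrak{p} = (x_i : i \in \Lambda)$; thus $R/I$ has a unique prime ideal, which is automatically $\Nil(R/I)$ and maximal, and Lemma \ref{metasemifieldpro} then delivers the unit-or-nilpotent dichotomy in one stroke. You instead verify the dichotomy by hand: an explicit nilpotency exponent $N = \sum_{j=1}^{k}(m_{i_j}-1)+1$ for each element of $\mathfrak{m}$ (using that any single polynomial involves only finitely many indeterminates), and an explicit geometric-series inverse $c^{-1}\sum_{j=0}^{N-1}(-c^{-1}h)^j$ for elements $c+h$ with $c \in K^{\times}$. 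Both arguments are valid for arbitrary $\Lambda$; the paper's is shorter and reuses its general lemma, which is the natural move given that the lemma was proved two sections earlier, while yours is self-contained and constructive, producing concrete nilpotency degrees and inverses. Your parenthetical warning --- that for infinite $\Lambda$ the ideal $\mathfrak{m}$ is not nilpotent as an ideal, so the bound must be taken element by element --- flags a real pitfall that the paper's abstract argument silently sidesteps (primality needs no uniform exponent), and is the one point where a careless direct proof would break; you handle it correctly.
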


\begin{proof}
Let $\mathfrak{p}$ be a prime ideal of $R$ containing $I$. Since $x^{m_i}_i$ is an element of $\mathfrak{p}$ and $\mathfrak{p}$ is prime, $\mathfrak{p}$ contains the maximal ideal $\mathfrak{m} = (x_i : i \in \Lambda)$ of $R$ and so $\mathfrak{p} = \mathfrak{m}$. This means that the only prime ideal of $R/I$ is $\mathfrak{p}/I$. Hence, by Lemma \ref{metasemifieldpro}, $R/I$ is completely primary, as required.
\end{proof}

\begin{remark}
The completely primary ring $\displaystyle \frac{\mathbb{F}_2[x_1,x_2,\dots,x_n]}{(x^2_1,x^2_2,\dots,x^2_n)}$ has applications in algebraic coding theory \cite{DoughertyYildizKaradeniz2011}.
\end{remark}

Now, we proceed to find proper completely primary semirings.

\begin{proposition}\label{completelyprimarysemirings}
Let $(P,+,0)$ be an idempotent commutative monoid and set $S=P \cup \{1\}$. Let us extend addition on $S$ as $a+1=1+a=1$ for all $a\in S$ and define multiplication over $S$ as $ab=0$ for all $a,b \in P$ and $a\cdot 1=1 \cdot a=a$ for all $a\in S$. Then, $S$ is a completely primary proper semiring.  
\end{proposition}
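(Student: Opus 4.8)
The plan is to verify directly that the prescribed operations turn $S = P \cup \{1\}$ into a semiring, and then to read off the remaining two properties almost for free. The verification naturally splits according to whether the adjoined element $1$ appears among the arguments, and it will turn out that the only axiom that genuinely uses the hypothesis that $P$ is idempotent is distributivity.

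First I would check that $(S,+,0)$ is a commutative monoid. Commutativity and the fact that $0$ is the additive identity are immediate from the defining rules together with the corresponding properties of $(P,+,0)$, since $1+a = a+1 = 1$ for every $a \in S$. For associativity of $+$, the case where all three summands lie in $P$ holds by assumption, while any bracketing of a sum in which at least one summand equals $1$ collapses to $1$, so both sides agree. Next I would check that $(S,\cdot,1)$ is a commutative monoid: commutativity and the identity law for $1$ are built into the definition, and associativity is clear because $1$ acts as a two-sided identity, while any product of two elements of $P$ equals $0$ and hence a triple product of elements of $P$ is $0$ bracketed either way (using $0 \in P$). The absorbing law $s\cdot 0 = 0$ for all $s\in S$ is immediate, since $a\cdot 0 = 0$ for $a \in P$ and $1\cdot 0 = 0$.

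The heart of the argument is distributivity, $a(b+c) = ab+ac$ for all $a,b,c \in S$, which I would prove by cases on how many of the three elements equal $1$. When $a = 1$, both sides equal $b+c$; when $a \in P$ and neither $b$ nor $c$ equals $1$, both sides equal $0$ because $b+c \in P$; and if exactly one of $b,c$ equals $1$, say $b=1$ and $c\in P$ with $a\in P$, then $a(1+c) = a\cdot 1 = a$ while $ab+ac = a + 0 = a$. The one remaining case is $a \in P$ with $b=c=1$: here $a(1+1) = a\cdot 1 = a$, whereas $ab + ac = a\cdot 1 + a\cdot 1 = a + a$, so the two sides agree precisely because $a$ is additively idempotent. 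This is the step I expect to be the main (indeed essentially the only) obstacle, in the sense that it is the unique place where the idempotency hypothesis on $P$ is indispensable; every other case is purely formal.

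Having established that $S$ is a semiring, I would finish quickly. For the completely primary property, $1$ is a unit (it is the multiplicative identity, $1\cdot 1 = 1$), and every $a \in P$ satisfies $a^2 = a\cdot a = 0$ and is therefore nilpotent; thus every element of $S$ is either a unit or a nilpotent, as required by Definition \ref{completelyprimarysemiringdef}. Finally, to see that $S$ is proper, i.e.\ not a ring, it suffices to exhibit an element with no additive inverse: since $1 \notin P$ we have $1 \neq 0$, while $1 + x = 1 \neq 0$ for every $x \in S$, so $1$ has no additive inverse and $(S,+)$ fails to be a group. Hence $S$ is a completely primary proper semiring.
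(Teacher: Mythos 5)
Your proof is correct, but it is genuinely more self-contained than the paper's. The paper disposes of the entire semiring verification in one line by citing Proposition 20 of \cite{Nasehpour2016}, which also delivers the extra fact that $1$ is the only unit of $S$; it then argues exactly as you do that $a^2 = 0$ for every $a \in P$, so all elements other than $1$ are nilpotent, and for properness it observes that $S$ is additively idempotent (indeed $1+1=1$ by the extension rule and $a+a=a$ on $P$), whence $S$ is not a ring --- you make the same point more locally by noting that the single element $1$ has no additive inverse. What your direct axiom check buys is transparency: the case analysis for distributivity isolates $a(1+1) = a\cdot 1 = a$ versus $a\cdot 1 + a\cdot 1 = a + a$ as the \emph{unique} place where the idempotency hypothesis on $(P,+,0)$ is indispensable, information that is completely hidden when the verification is outsourced to a citation. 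What the paper's route buys is brevity, plus the slightly stronger conclusion $U(S)=\{1\}$; but that extra fact is not needed, since Definition \ref{completelyprimarysemiringdef} only asks that each element be a unit or a nilpotent, which your argument establishes directly.
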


\begin{proof}
By Proposition 20 in \cite{Nasehpour2016}, $S$ is a semiring such that the only unit element of $S$ is 1. Also, since $ab=0$, for all $a,b \in P$, $a^2 = 0$, for each $a\in P$. Therefore, all elements of $S$ except 1 are nilpotent. Since $S$ is an additively idempotent semiring, $S$ is not a ring. This means that $S$ is a proper semiring such that each element of $S$ is either a unit or a nilpotent. This completes the proof.
\end{proof}

\begin{proposition}\label{expectationsemiringcompletelyprimary}
Let $S$ be a semiring and $M$ an $S$-semimodule such that $(M,+)$ is a group, i.e., $V(M) = M$. Then, the semiring $S$ is completely primary if and only if the expectation semiring $S \widetilde{\oplus} M$ is completely primary.
\end{proposition}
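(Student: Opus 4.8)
The plan is to prove the biconditional by analyzing the structure of elements of the expectation semiring $S \widetilde{\oplus} M$ and relating nilpotency and invertibility in $S \widetilde{\oplus} M$ to the corresponding properties in $S$. The key observation, which I would establish first as the backbone of both directions, is a characterization: an element $(s,m) \in S \widetilde{\oplus} M$ is a unit if and only if $s$ is a unit in $S$, and $(s,m)$ is nilpotent if and only if $s$ is nilpotent in $S$. The unit half of this is already available, since the proof of Proposition~\ref{expectationsemiringclassical} invokes Theorem 2.2 in \cite{Nasehpour2020} to say that $(s,m)$ is a unit in $S \widetilde{\oplus} M$ precisely when $s$ is a unit in $S$ (this is where the hypothesis $V(M) = M$ is used, so that the nilpotent-plus-unit part of the first coordinate can be inverted). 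So the main new work is the nilpotency characterization.

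For the nilpotency claim I would compute powers of $(s,m)$ directly from the multiplication rule $(s_1,m_1)\cdot(s_2,m_2) = (s_1 s_2, s_1 m_2 + s_2 m_1)$. An easy induction gives $(s,m)^k = (s^k,\, k\, s^{k-1} m)$, where $k\, s^{k-1} m$ denotes the $k$-fold sum of $s^{k-1} m$ in $M$. From this formula the forward implication of the nilpotency characterization is immediate: if $(s,m)^k = (0,0_M)$ then $s^k = 0$, so $s$ is nilpotent. The converse requires the group hypothesis on $(M,+)$: if $s^n = 0$, then the first coordinate of $(s,m)^n$ vanishes, and I need the second coordinate $n\, s^{n-1} m$ to vanish as well after possibly raising to a higher power. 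Taking $(s,m)^{2n}$, say, kills $s^{n}$ and leaves a second coordinate of the form $(\text{multiple of } s^{2n-1}) m = 0$ since $2n-1 \geq n$ forces $s^{2n-1} = 0$; here I use that $0_S \cdot m = 0_M$. So $(s,m)$ is nilpotent.

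With the characterization in hand, both directions of the proposition follow formally. Suppose $S$ is completely primary and let $(s,m) \in S \widetilde{\oplus} M$. Either $s$ is a unit in $S$, whence $(s,m)$ is a unit in $S \widetilde{\oplus} M$, or $s$ is nilpotent in $S$, whence $(s,m)$ is nilpotent; thus $S \widetilde{\oplus} M$ is completely primary. Conversely, suppose $S \widetilde{\oplus} M$ is completely primary and let $s \in S$. Applying the hypothesis to $(s,0_M)$, it is either a unit or a nilpotent in $S \widetilde{\oplus} M$, and by the characterization $s$ is correspondingly a unit or a nilpotent in $S$; thus $S$ is completely primary.

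The step I expect to be the main obstacle is the converse of the nilpotency characterization, namely verifying carefully that $s$ nilpotent in $S$ forces $(s,m)$ nilpotent in $S \widetilde{\oplus} M$. This is where the group structure on $(M,+)$ genuinely enters: in a general $S$-semimodule one cannot conclude that the second coordinate collapses, because the ``multiple of $s^{2n-1}$'' appearing there must actually be shown to act as the zero scalar on $m$, and the clean way to see it is through the power formula together with $0_S \cdot m = 0_M$. I would double-check the induction establishing $(s,m)^k = (s^k, k\, s^{k-1} m)$ and confirm that the exponent bookkeeping is robust (raising to exponent $2n$ rather than $n$), since this is the one place a routine-looking calculation could hide a genuine use of the hypotheses.
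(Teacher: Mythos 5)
Your proof is correct, and it is more self-contained than the paper's. The paper disposes of the proposition in two lines by citing Theorem 2.2 of \cite{Nasehpour2020} for \emph{both} equalities $U(S \widetilde{\oplus} M) = U(S) \widetilde{\oplus} M$ and $\Nil(S \widetilde{\oplus} M) = \Nil(S) \widetilde{\oplus} M$, after which the biconditional is immediate; you cite the external theorem only for the unit half and reconstruct the nilpotent half from scratch via the power formula $(s,m)^k = \bigl(s^k,\, k\,s^{k-1}m\bigr)$. Your induction checks out, since $(s^k, k\,s^{k-1}m)\cdot(s,m) = (s^{k+1}, s^k m + k\,s^k m) = (s^{k+1}, (k+1)\,s^k m)$, and the exponent choice $2n$ (even $n+1$ suffices) correctly annihilates both coordinates. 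What your computation buys, beyond independence from the reference, is the stronger observation that $\Nil(S \widetilde{\oplus} M) = \Nil(S) \widetilde{\oplus} M$ holds for an \emph{arbitrary} $S$-semimodule $M$. This exposes one error in your commentary, though it is harmless to the proof itself: you assert that the group structure on $(M,+)$ ``genuinely enters'' in the nilpotency converse, but it does not --- your own argument uses only the semimodule axiom $0_S \cdot m = 0_M$, which is valid in every $S$-semimodule. The hypothesis $V(M) = M$ is needed exactly where you delegated to the cited theorem, namely the unit half: for $s \in U(S)$, the second coordinate $x$ of an inverse of $(s,m)$ must satisfy $s x + s^{-1} m = 0_M$, and solving this requires additive inverses in $M$.
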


\begin{proof}
Let $M$ be an $S$-semimodule such that $V(M) = M$. By Theorem 2.2 in \cite{Nasehpour2020}, we have the following: \[U(S \widetilde{\oplus} M) = U(S) \widetilde{\oplus} M \text{~and~} \Nil(S\widetilde{\oplus} M) = \Nil(S) \widetilde{\oplus} M.\] Hence, the semiring $S$ is completely primary if and only if the expectation semiring $S \widetilde{\oplus} M$ is completely primary, as required.	
\end{proof}

In the following, we describe how to construct proper classical semirings such that they are not completely primary.

\begin{theorem}\label{directproductcompletelyprimarysemirings}
	Let $\Lambda$ be an index set with at least two elements and $S_i$ be a completely primary semiring for each $i \in \Lambda$. Then, $\prod_{i \in \Lambda} S_i$ is classical but not completely primary.
\end{theorem}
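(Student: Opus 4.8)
The plan is to treat the two claims in turn, relying on what has already been proved about direct products. For the \emph{classical} assertion there is essentially nothing to do: each factor $S_i$ is completely primary, and since every nilpotent element is a zero-divisor (this is the implication $(1)\implies(2)$ recorded in the introduction), each $S_i$ is classical. Theorem \ref{directproductclassicalsemirings} then applies verbatim to give that $\prod_{i \in \Lambda} S_i$ is classical, so this step amounts to a citation.

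The real content is to show that the product is \emph{not} completely primary, i.e.\ to produce an element that is neither a unit nor nilpotent. Since $\Lambda$ has at least two elements, I would fix distinct indices $i_0, i_1 \in \Lambda$ and take the idempotent $e = (e_i)$ with $e_{i_0} = 1$ and $e_i = 0$ for all $i \neq i_0$. Because $e$ is idempotent, $e^n = e$ for every positive integer $n$, and the $i_0$-th coordinate of $e$ equals $1 \neq 0$; hence $e^n \neq 0$ for all $n$ and $e$ is not nilpotent. On the other hand, recalling from the proof of Theorem \ref{directproductclassicalsemirings} that the units of $\prod_{i \in \Lambda} S_i$ are exactly the tuples whose every coordinate is a unit, the $i_1$-th coordinate of $e$ equals $0$, which is not a unit of $S_{i_1}$, so $e$ is not a unit. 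Thus $e$ witnesses that the product is not completely primary. As a consistency check, $e$ is annihilated by the nonzero idempotent supported at $i_1$, so it is a zero-divisor, exactly as the classical conclusion predicts.

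The one point to keep an eye on is the nontriviality of the factors: the argument requires $1 \neq 0$ in $S_{i_0}$ (so that $e$ is nonzero and genuinely non-nilpotent) and $1 \neq 0$ in $S_{i_1}$ (so that $0$ is a genuine non-unit there). Under the standing convention that semirings satisfy $0 \neq 1$, both hold automatically, and I expect no further obstacle; the proof reduces to selecting the right idempotent and reading off its two relevant coordinates.
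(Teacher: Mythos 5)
Your proposal is correct and follows essentially the same route as the paper: cite Theorem \ref{directproductclassicalsemirings} for classicality (using that completely primary implies classical), then exhibit a $\{0,1\}$-tuple with a $1$ in one coordinate (so no power vanishes) and a $0$ in another (so it is not a unit, since $U\bigl(\prod_{i} S_i\bigr) = \prod_{i} U(S_i)$). The only difference is cosmetic: the paper takes the tuple that is $1$ everywhere except a single $0$ entry, whereas you take the tuple supported at a single index, and both witnesses work for exactly the reasons you give.
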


\begin{proof}
	By Theorem \ref{directproductclassicalsemirings}, $\prod_{i \in \Lambda} S_i$ is a classical semiring. However, if $(s_i)$ is an element of $\prod_{i} S_i$ such that each $s_i$ is 1 except one of them which is zero, then $(s_i)$ is neither a unit nor a nilpotent. This shows that $\prod_{i} S_i$ is not completely primary and the proof is complete.
\end{proof}

One may have seen in ring theory that any Artinian local ring is completely primary (see p. 136 in \cite{Cohn2003}). So, it is quite natural to see if an Artinian local semiring $S$ is completely primary. It turns out that this is not the case even if $S$ is finite:  

\begin{proposition}\label{Artiniannotcompletelyprimary}
Let $S = \{0,u,1\}$ be a multiplicatively idempotent semiring with three elements such that $u + u \neq 1$ (for example, let $S$ be Hu's or LaGrassa's semiring explained in Examples \ref{nonclassicalpropersemirings}). Then, $S$ is local and Artinian but not completely primary. 
\end{proposition}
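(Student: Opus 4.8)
The plan is to determine all ideals of $S$ explicitly, read off locality and the Artinian property directly from this finite list, and then exhibit $u$ as an element that is neither a unit nor nilpotent, contradicting the definition of completely primary.

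First I would catalogue the ideals. Every ideal contains $0$ (since $0 \cdot a = 0$ for any $a$ in the ideal), and any ideal containing $1$ equals $S$ (since $s \cdot 1 = s$ for all $s$). Hence every proper ideal is a subset of $\{0, u\}$. The singleton $\{0\}$ is clearly an ideal, and the crucial step is to check that $\mathfrak{m} = \{0, u\}$ is an ideal. Closure under multiplication by $S$ follows from $0 \cdot u = 0$, $u \cdot u = u$ (multiplicative idempotence), and $1 \cdot u = u$. Closure under addition is exactly where the hypothesis enters: since $u + u \in S \setminus \{1\} = \{0, u\}$ by the assumption $u + u \neq 1$, the sum stays inside $\mathfrak{m}$. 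Thus the complete chain of ideals is $\{0\} \subsetneq \mathfrak{m} \subsetneq S$.

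From this list, locality and the Artinian property are immediate. The only proper ideals are $\{0\}$ and $\mathfrak{m}$, with $\{0\} \subsetneq \mathfrak{m}$, so $\mathfrak{m}$ is the unique maximal ideal and $S$ is local. Moreover $S$ has only finitely many ideals (indeed exactly three), so every descending chain of ideals stabilizes and $S$ is Artinian. To see that $S$ is not completely primary (Definition \ref{completelyprimarysemiringdef}), I would focus on $u$: idempotence gives $u^n = u$ for all $n \geq 1$, so $u \neq 0$ is not nilpotent, while the products $u \cdot 0 = 0$, $u \cdot u = u$, and $u \cdot 1 = u$ show that none equals $1$, so $u$ is not a unit. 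Hence $u$ is neither a unit nor nilpotent and $S$ fails to be completely primary.

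The only subtle point, and the single place the hypothesis $u + u \neq 1$ is genuinely needed, is the closure of $\mathfrak{m}$ under addition; without it the set $\{0, u\}$ need not be an ideal and $u$ could generate the whole semiring, so the ideal structure would be different. Everything else is a routine finite verification.
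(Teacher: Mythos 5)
Your proof is correct and follows essentially the same route as the paper's: identify $\{0,u\}$ as the unique maximal ideal (giving locality), get the Artinian property from finiteness, and use $u^2=u$ to show $u$ is neither a unit nor nilpotent. Your explicit verification that $\{0,u\}$ is closed under addition is a nice touch, since the paper leaves implicit that the hypothesis $u+u\neq 1$ is exactly what makes the ideal generated by $u$ proper.
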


\begin{proof}
Since $S$ is finite, it is obviously Artinian. Note that the only non-trivial ideal of $S$ is the principal ideal generated by $u$ which is definitely the only maximal ideal of $S$. So, $S$ is local. Clearly, $S$ is not completely primary because $u^2 = u$ which means that $u$ is neither a unit nor a nilpotent. This finishes the proof. 
\end{proof}

\begin{proposition}\label{semiringidealscompletelyprimary}
	Let $S$ be a semiring. Then, the following statements hold:
	\begin{enumerate}
		\item If $\Id(S)$ is completely primary, then so is the semiring $S$.
		\item If $S$ is completely primary and a PIS, then $\Id(S)$ is also completely primary. 
	\end{enumerate}
	
\end{proposition}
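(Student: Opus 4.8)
The plan is to mirror the structure of the proof of Proposition \ref{semiringidealsclassical}, since the statement is the exact analogue for completely primary semirings, with ``nilpotent'' replacing ``zero-divisor''. The key observation throughout is that the only unit of $\Id(S)$ is $S$ itself, so an ideal fails to be a unit precisely when it is proper.

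\begin{proof}
(1): Suppose $\Id(S)$ is completely primary. Let $s \in S$ be a non-unit; then the principal ideal $(s)$ is proper, hence not a unit in $\Id(S)$. Since $\Id(S)$ is completely primary, $(s)$ must be nilpotent, so $(s)^n = (0)$ for some positive integer $n$. The plan is to translate this into nilpotence of the element $s$. The key point is that $(s)^n$ is the principal ideal generated by $s^n$ together with products of $s$ with arbitrary scalars; more precisely, $(s)^n = (s^n)$ since $\Id(S)$ is commutative and $(s)^n = (s)\cdots(s)$ contains $s^n$ and is contained in the ideal generated by $s^n$. Hence $(s^n) = (0)$, which forces $s^n = 0$, so $s$ is nilpotent. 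Therefore every non-unit of $S$ is nilpotent, and $S$ is completely primary.

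(2): Conversely, suppose $S$ is completely primary and a PIS. Since $S$ is a PIS, every element of $\Id(S)$ has the form $(s)$ for some $s \in S$. Let $(s)$ be a non-unit of $\Id(S)$; then $(s) \neq S$, which is equivalent to $s$ not being a unit of $S$. As $S$ is completely primary, $s$ is nilpotent, so $s^n = 0$ for some positive integer $n$. It follows that $(s)^n = (s^n) = (0)$, so $(s)$ is nilpotent in $\Id(S)$. Thus every non-unit of $\Id(S)$ is nilpotent, and $\Id(S)$ is completely primary. This completes the proof.
\end{proof}

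The main obstacle, and the only place where genuine care is needed beyond transcribing the classical-case argument, is the identity $(s)^n = (s^n)$ in $\Id(S)$. In the classical-semiring proof one only needed a single product $(s)(t) = (0)$ and could extract a witness element directly; here the nilpotency index forces us to control the $n$-th power of a principal ideal. The inclusion $(s^n) \subseteq (s)^n$ is immediate since $s^n$ is a summand-free product of $n$ copies of $s$, and the reverse inclusion holds because any generator $\sum a_{i_1}s \cdots a_{i_n}s$ of $(s)^n$ is a multiple of $s^n$ by commutativity of $S$. Once this identity is in hand, passing between nilpotence of the element $s$ and nilpotence of the ideal $(s)$ is routine, and both directions close cleanly.
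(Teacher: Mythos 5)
Your proof is correct and follows essentially the same route as the paper's: both directions hinge on translating between nilpotence of $s$ and $(s)^n=(0)$, a step the paper dismisses with ``this evidently implies'' and you justify explicitly via the identity $(s)^n=(s^n)$. Your filling-in of that detail is accurate, so this is the paper's argument with one implicit step made explicit.
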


\begin{proof}
	(1): Let $\Id(S)$ be completely primary. If $s \in S$ is not a unit in $S$, then the principal ideal $(s)$ is a proper ideal of $S$, and so, it cannot be a unit in $\Id(S)$ because the only unit element of the semiring $\Id(S)$ is $S$. Now, since $\Id(S)$ is completely primary, the ideal $(s)$ is nilpotent on $\Id(S)$ which means that there is positive integer such that $(s)^n = (0)$. This evidently implies that $s^n = 0$. Therefore, $s$ is nilpotent. Thus $S$ is completely primary.
	
	(2): Now, let $S$ be completely primary. Our purpose is to prove that $\Id(S)$ is completely primary. Since by assumption, $S$ is a PIS, each element of $\Id(S)$ is of the form $(s)$. If $(s)$ is not a unit, then $(s) \neq S$ and this is equivalent to say that $s$ is not a unit in $S$. So, $s^n = 0$, for some positive integer $n$. This implies that $(s)^n = (0)$. Therefore, $(s)$ is nilpotent. Thus $\Id(S)$ is completely primary and the proof is complete. 
\end{proof}

\begin{corollary}
	If $R$ is a completely primary and principal ideal ring, then $\Id(R)$ is a completely primary semiring.
\end{corollary}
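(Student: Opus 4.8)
The plan is to recognize this corollary as a direct specialization of Proposition \ref{semiringidealscompletelyprimary}(2) to the case where the ambient semiring happens to be a ring. Recall that in this paper rings are a particular kind of semiring, so any ring $R$ is in particular a semiring, and the notion of ``completely primary'' (each element is either a unit or a nilpotent) is defined uniformly for rings and semirings; in fact the two definitions agree by Definition \ref{completelyprimarysemiringdef} and the standard characterization of completely primary rings recalled at the start of \S\ref{sec:completelyprimarysemirings}.

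First I would verify that the hypotheses of Proposition \ref{semiringidealscompletelyprimary}(2) are satisfied by $R$. A principal ideal ring is, by definition, a ring in which every ideal is principal; this is exactly the condition that $R$ be a principal ideal semiring (PIS) in the sense recalled before Proposition \ref{semiringidealsclassical}. Thus a completely primary principal ideal ring $R$ is simultaneously completely primary and a PIS when viewed as a semiring.

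With both hypotheses in place, I would simply invoke Proposition \ref{semiringidealscompletelyprimary}(2) with $S = R$ to conclude that $\Id(R)$ is a completely primary semiring. Concretely, one unwinds that argument: since $R$ is a PIS, every element of $\Id(R)$ has the form $(s)$ for some $s \in R$; if $(s)$ is not a unit of $\Id(R)$, then $(s) \neq R$, so $s$ is a non-unit of $R$, hence nilpotent by complete primality, giving $s^n = 0$ and therefore $(s)^n = (0)$.

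There is essentially no obstacle here, as the statement is an immediate corollary; the only point requiring a moment's attention is the bookkeeping that ``principal ideal ring'' coincides with ``PIS'' for a ring and that the ring-theoretic and semiring-theoretic notions of complete primarity agree, after which the conclusion is just the cited proposition applied verbatim.
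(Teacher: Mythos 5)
Your proposal is correct and matches the paper's reasoning exactly: the paper states this corollary without a separate proof precisely because it is the direct specialization of Proposition \ref{semiringidealscompletelyprimary}(2) to $S = R$, which is what you do. Your brief verification that a ring is a semiring, that ``principal ideal ring'' coincides with the PIS condition, and that the two notions of complete primarity agree covers all the bookkeeping the paper leaves implicit.
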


\begin{example}
Let $p$ be a prime number and $k$ a positive integer. Since $\mathbb{Z}_{p^k}$ is a completely primary and principal ideal ring, $\Id(\mathbb{Z}_{p^k})$ is a completely primary semiring.
\end{example}

\begin{remark}\label{monoidswithzero}
Let us recall that a monoid $(M,\cdot, 1)$ is a monoid with zero if there is an element 0 in $M$ such that \[m \cdot 0 = 0 \cdot m = 0, \qquad\forall~m \in M.\] Some of the results of the current paper such as Proposition \ref{piregularpro}, Proposition \ref{latticesnotclassical}, Theorem \ref{directproductclassicalsemirings}, Proposition \ref{directproductnilpotentfreeclassical}, and Theorem \ref{directproductcompletelyprimarysemirings} can be easily stated and proved for commutative monoids with zero. 
\end{remark}


\bibliographystyle{plain}

\end{document}